\newtheorem{thm}{Theorem}[section]
\newtheorem{prop}[thm]{Proposition}
\newtheorem{lem}[thm]{Lemma}
\newtheorem{coro}[thm]{Corollary}
\newcommand{\tdef}[1]{\emph{#1}}
\newcommand{\EX}{\mathbb{E}}
\newcommand{\cH}{\mathcal{H}}
\newcommand{\cK}{\mathcal{K}}
\newcommand{\cJ}{\mathcal{J}}
\newcommand{\cT}{\mathcal{T}}
\newcommand{\cB}{\mathcal{B}}
\newcommand{\eps}{\varepsilon}
\newcommand{\pg}{p_0}
    \let\@fnsymbol\@arabic
\author{Oliver Cooley\thanks{Supported by Austrian Science Fund (FWF) \& German Research Foundation (DFG): I3747}\! ,  Wenjie Fang \thanks{Supported by Austrian Science Fund (FWF): I2309 \& P27290}\! , Nicola Del Giudice\thanks{Supported by Austrian Science Fund (FWF): W1230II}\! ,  Mihyun Kang \footnotemark[2] \footnotemark[3] \footnotemark[4]\\
\small Graz University of Technology\\
\small Institute of Discrete Mathematics\\
\small 8010 Graz, Austria\\
\small\tt \{cooley,fang,delgiudice,kang\}@math.tugraz.at}
\title{Subcritical random hypergraphs, high-order components, and hypertrees\footnote{An extended abstract of this work has been accepted by the Proceedings of Analytic Algorithmics and Combinatorics (ANALCO19).}}
\begin{document}

\maketitle

\abstract{One of the central topics in the theory of random graphs deals with the phase transition in the order of the largest components. In the binomial random graph $\mathcal{G}(n,p)$, the threshold for the appearance of the unique largest component (also known as the giant component) is $p_g = n^{-1}$. More precisely, when $p$ changes from $(1-\eps)p_g$ (subcritical case) to $p_g$ and then to $(1+\eps)p_g$ (supercritical case) for $\eps>0$, with high probability the order of the largest component increases smoothly from $O(\eps^{-2}\log(\eps^3 n))$ to $\Theta(n^{2/3})$ and then to $(1 \pm o(1)) 2 \eps n$. Furthermore, in the supercritical case, with high probability the largest components except the giant component are trees of order $O(\eps^{-2}\log(\eps^3 n))$, exhibiting a structural symmetry between the subcritical random graph and the graph obtained from the supercritical random graph by deleting its giant component.

As a natural generalisation of random graphs and connectedness, we consider the binomial random $k$-uniform hypergraph $\mathcal{H}^k(n,p)$ (where each $k$-tuple of vertices is present as a hyperedge with probability $p$ independently) and the following notion of high-order connectedness. Given an integer $1 \leq j \leq k-1$, two sets of $j$ vertices are called \emph{$j$-connected} if there is a walk of hyperedges between them such that any two consecutive hyperedges intersect in at least $j$ vertices. A $j$-connected component is a maximal collection of pairwise $j$-connected $j$-tuples of vertices. Recently, the threshold for the appearance of the giant $j$-connected component in $\mathcal{H}^k(n,p)$ and its order were determined.
In this article, we take a closer look at the subcritical random hypergraph. We determine the structure, order, and size of the largest $j$-connected components, with the help of a certain class of ``hypertrees'' and related objects. In our proofs, we combine various probabilistic and enumerative techniques, such as generating functions and couplings with branching processes. Our study will pave the way to establishing a symmetry between the subcritical random hypergraph and the hypergraph obtained from the supercritical random hypergraph by deleting its giant $j$-connected component.
}

\section{Introduction}

\subsection{Motivation} \label{sec:intro:motivation}
One of the most prominent results on random graphs is the so-called {\em phase transition} in the order of the largest components, first discussed by Erd\H{o}s and R\'enyi in their seminal work \cite{ErdosRenyi60}: a small change in the edge density around the critical value drastically alters the structure and order of the largest components. Their result was improved for example by Bollob\'as \cite{Bollobas84} and \L{}uczak \cite{Luczak90} and is often stated for the binomial random graph $\mathcal G(n,p)$, a graph with vertex set $[n]:=\{1,\ldots, n\}$ in which each pair of vertices is present as an edge with probability $p$ independently. Throughout the paper, $\log$ denotes the natural logarithm and we say that an event holds {\em with high probability} ({\em whp} for short) if the probability that it holds tends to $1$ as $n \to \infty$. 
 
\begin{thm}[\cite{Bollobas84,BollobasBook,ErdosRenyi60,Luczak90}] \label{thm:giantgraph}
Let $0<\eps<1 $ be a constant or a function in $n$ satisfying $\eps \rightarrow 0$ and $\ell:=\eps^3 n \rightarrow \infty$. For each $i\in \mathbb N$, let $C_i=C_i(\mathcal G(n,p))$ denote the number of vertices in the $i$-th largest component in $\mathcal{G}(n,p)$.
\begin{itemize}
\item[(1)] If $p = (1-\eps)n^{-1}$, then whp every component is either a tree or unicyclic, and for every constant $i\ge 1$, the $i$-th largest component is a tree. Furthermore, for any function $\omega=\omega(n) \rightarrow \infty$, whp
\[
\left|C_1-\alpha^{-1} \left(\log \ell - \frac{5}{2}\log \log \ell \right) \right| \le \omega \eps^{-2},
\]
where $\alpha = - \eps - \log (1-\eps)= \eps^2 / 2 + O(\eps^3)$ and $C_i= (2+o(1))\eps^{-2} \log  (\ell)$ for $i\ge 2$.
\item[(2)] If $p = (1+\eps)n^{-1}$, then whp the largest component contains at least two cycles and
\[ 
C_1=(1 + o(1)) 2 \eps n.
\] 
Furthermore, for every $i \ge 2$, whp the $i$-th largest component is a tree with $C_i = (2+o(1))( \eps^{-2} \log \ell)$ and in particular, for any function $\omega=\omega(n) \rightarrow \infty$, whp
\[
\left|C_2-\beta^{-1} \left(\log \ell - \frac{5}{2}\log \log  \ell \right) \right| \le \omega \eps^{-2},
\]
where $\beta = \eps - \log (1+\eps)= \eps^2 / 2 + O(\eps^3)$.
\end{itemize}
\end{thm}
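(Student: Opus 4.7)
The plan is to treat the subcritical case (1) directly via a first/second moment analysis of tree components, and then to reduce the supercritical case (2) to the subcritical one via a duality (symmetry) principle. For the subcritical case, I let $X_k$ denote the number of tree components of order exactly $k$ in $\mathcal{G}(n,p)$. Cayley's formula yields
\[
\EX[X_k]=\binom{n}{k}\,k^{k-2}\,p^{k-1}\,(1-p)^{k(n-k)+\binom{k}{2}-(k-1)}.
\]
For $p=(1-\eps)/n$, Stirling's formula combined with the identity $(1-\eps)^k e^{\eps k}=e^{-\alpha k}$ yields
\[
\EX[X_k]\sim \frac{n}{\sqrt{2\pi}\,k^{5/2}\,(1-\eps)}\, e^{-\alpha k}
\]
in the relevant range of $k$. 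Solving $\EX[X_k]\approx 1$ identifies the critical size $k^{\star}$: inverting the transcendental equation $\alpha k+\tfrac52\log k=\log n+O(1)$ gives $k^{\star}=\alpha^{-1}\bigl(\log\ell-\tfrac{5}{2}\log\log\ell\bigr)$ up to an error which should be absorbed into the $\omega\eps^{-2}$ window (note that $\log n=\log\ell+3\log\eps^{-1}$, and the $\log\eps^{-1}$ terms must be carried through the inversion). A first-moment estimate $\sum_{k\ge k^{\star}+\omega\eps^{-2}}\EX[X_k]=o(1)$ rules out tree components much larger than $k^{\star}$, while a standard second-moment computation (two tree components on disjoint vertex sets are nearly independent) gives $\mathrm{Var}(X_k)=o((\EX[X_k])^2)$ and hence $X_k\ge 1$ whp for $k$ slightly below $k^{\star}$ by Chebyshev.

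To pin down the structure, I would bound the expected number of components containing a cycle: direct enumeration shows the expected number of unicyclic components is $O(\log\eps^{-1})$, while the expected number of components with cycle-excess $\ge 2$ is $o(1)$. Hence whp every component is a tree or unicyclic, and the top few largest components are trees. Applying the same first/second moment machinery at $k=(2+o(1))\eps^{-2}\log\ell$ shows that many tree components of this order exist whereas none substantially larger do (apart from $C_1$), which gives $C_i=(2+o(1))\eps^{-2}\log\ell$ for every constant $i\ge 2$.

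For the supercritical case $p=(1+\eps)/n$, I would first establish the order $|C_1|=(1+o(1))2\eps n$ of the giant via the standard BFS/branching-process coupling: the exploration from a typical vertex resembles a Galton--Watson process with offspring distribution $\mathrm{Po}(1+\eps)$ and survival probability $2\eps+O(\eps^2)$. The key structural input is then the symmetry principle: conditioned on the vertex set $V\setminus C_1$, the induced subgraph is distributed as $\mathcal{G}(n-|C_1|,p)$ conditioned on the event that it contains no giant component. Since $(n-|C_1|)p=1-\eps'$ with $\eps'=\eps+o(\eps)$ (as $(1+\eps)(1-2\eps)=1-\eps-2\eps^2$), the residual graph is subcritical with rate parameter corresponding to $\beta=\eps-\log(1+\eps)$, the dual of $\alpha$. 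Applying part (1) to this residual graph then yields the claimed concentration of $C_2$ and the estimates for $C_i$ with $i\ge 3$.

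The main technical obstacle is extracting the precise correction $-\tfrac{5}{2}\log\log\ell$ within the narrow window $\omega\eps^{-2}$: this requires careful tracking of the polynomial prefactor $k^{-5/2}$ when inverting $\EX[X_k]=1$, together with ensuring that the second-moment computation and the duality reduction do not introduce errors on a scale larger than $\eps^{-2}$ (for any $\omega$ growing to infinity). Verifying the duality statement itself rigorously---that revealing edges outside the giant yields the correct subcritical marginal on $n-|C_1|$ vertices---is the second delicate piece, and is most cleanly carried out via \L uczak's coupling argument.
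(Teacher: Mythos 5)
You should first note that the paper does not prove Theorem~\ref{thm:giantgraph} at all: it is quoted from the literature \cite{Bollobas84,BollobasBook,ErdosRenyi60,Luczak90}, and the paper's own work is the hypergraph analogue (Theorem~\ref{thm:mainthm}), whose subcritical machinery (branching-process coupling, tree enumeration, tail sums, second moment over a range of sizes, exclusion of wheels) your sketch for part (1) closely parallels. Within that sketch, however, there are two concrete gaps at exactly the precision the theorem demands. First, the centering: solving $\EX[X_k]\approx 1$, i.e.\ $\alpha k+\tfrac52\log k=\log n+O(1)$, gives $\alpha k^\star=\log\ell-\tfrac52\log\log\ell-2\log(1/\eps)+O(1)$, which is off from the claimed centre by $\Theta\bigl(\eps^{-2}\log(1/\eps)\bigr)$; this is \emph{not} absorbed by the window $\omega\eps^{-2}$, since the statement must hold for every $\omega\to\infty$, however slowly growing, while $\log(1/\eps)\to\infty$ when $\eps\to0$ polynomially. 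The correct centre comes from the expected number of (vertices in) components of size \emph{at least} $k$: the geometric tail contributes a factor $\Theta(1/\alpha)$, i.e.\ an extra $-\log\alpha\approx 2\log(1/\eps)$ in the exponent, and it is precisely this term that converts $\log n$ into $\log(\eps^3 n)=\log\ell$. (This is exactly how the paper's own upper-bound computation produces $\lambda=\eps^3\binom{n}{j}$: the sum $\sum_{t\ge\hat s}e^{-\delta t}\approx e^{-\delta\hat s}/\delta$ supplies the $\tfrac32\log\delta$ correction.)

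Second, the lower bound as stated would fail: at $k=k^\star-\omega\eps^{-2}$ with $\omega$ growing slowly, one has $\EX[X_k]=\Theta(\eps^2e^{\omega})$, which tends to $0$ whenever $e^{\omega}=o(\eps^{-2})$, so Chebyshev applied to the single random variable $X_k$ cannot show $X_k\ge1$ whp. You must aggregate over an interval of component sizes of length $\Theta(\omega\alpha^{-1})$ (and it is cleaner to count vertices in such components rather than components, to control the correlation coming from two vertices lying in the same component), which is precisely the role of the variable $S_+$ and the range $[s_*,s_0]$ in the paper's proof of Lemma~\ref{lem:lower-bound-second-moment}. Finally, for part (2) your duality reduction is the standard route, but the approximation $(n-|C_1|)p=1-\eps'$ with $\eps'=\eps+o(\eps)$ is too coarse: a relative error $o(1)$ in the subcritical rate shifts the centre by far more than $\eps^{-2}$; one needs the exact conjugate relation (the invariance of $\mu e^{-\mu}$ under duality, which yields $\beta=\eps-\log(1+\eps)$ exactly) together with a rigorous coupling of the residual graph, as you indicate but do not carry out. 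The paper offers no guidance here, since it proves nothing in the supercritical regime and explicitly leaves the symmetry phenomenon open for $j\ge2$.
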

Note that in the supercritical random graph (\textit{i.e.}\ when $p = (1+\eps)n^{-1}$), the largest component is substantially larger than the second largest component and therefore it is also called the \emph{giant} component. Theorem~\ref{thm:giantgraph} displays a \emph{symmetry} between the structure of the subcritical random graph and the supercritical random graph with the giant component removed.

Since this result, various models of random graphs have been introduced and analysed for similar phenomena. For instance, considerable attention has been paid to random regular graphs (see \textit{e.g.}\ \cite{BollobasBook,Wormald99}  for an overview). As for $\mathcal{G}(n,p)$, this random graph model is ``homogeneous'', meaning that all vertices are equivalent, and whp all vertex degrees lie within a small range. 
In contrast, many random models for ``real-world'' graphs show highly inhomogeneous behaviour with a large spread of degrees, whose distributions often follow a power law. Thus, also the phase transition in inhomogeneous random graphs such as scale-free graphs \cite{BollobasRiordan03} and distance graphs \cite{AjaziNapolitanoTurova17} has been investigated \cite{BollobasJansonRiordan07}, with a wide analysis of the subcritical case (see \textit{e.g.}\ \cite{Janson08,Turova11}).

Higher-dimensional analogues of random graphs and their phase transitions have also drawn particular attention. The most commonly studied higher-dimensional analogue of $\mathcal G(n,p)$ is the binomial random $k$-uniform hypergraph $\cH^k(n,p)$ defined below. Amongst other properties, vertex-connectedness
\cite{AndriamaRavelomanana05,BehrischCojaOghlanKang10b,BCOK14,BollobasRiordan12c,BollobasRiordan17,
KaronskiLuczak96,KaronskiLuczak97,Poole15,
RavelomananaRijamamy2006,SchmidtShamir85}
and high-order connectedness (also known as $j$-tuple-connectedness)
\cite{CKKgiant,CooleyKangPerson18} of $\cH^k(n,p)$  have been extensively studied and will be discussed in more detail in Section \ref{sec:intro:relwork}.
In parallel, Linial and Meshulam instigated research into random simplicial complexes,
which have since also been extensively studied~\cite{KahlePittel16,LinialMeshulam06,LinialPeled16,MeshulamWallach08}.

Before stating our results, we introduce the necessary concepts.
Let $k \geq 2$ and $1 \leq j \leq k-1$ be integers. 
A \tdef{$k$-uniform hypergraph} $H$ is a pair $H=(V,E)$, where $V$ is the set of \tdef{vertices} and $E\subseteq \binom{V}{k}$, a collection of $k$-element subsets of $V$, is the set of \tdef{hyperedges}. An $\ell$-element subset of $V$ is called an \tdef{$\ell$-set of $V$} (or \tdef{$\ell$-set} for short). A pair $\{J_1,J_2\}$ of $j$-sets are called \tdef{$j$-tuple-connected} (\tdef{$j$-connected} for short) in $H$ if there is a sequence of hyperedges $K_1,\ldots, K_m$ such that $J_1 \subset K_1$, $J_2 \subset K_m$ and $|K_i \cap K_{i+1}| \geq j$ for all $1\leq i \leq m-1$. Additionally, any $j$-set is always $j$-connected to itself. The \tdef{$j$-connected components} (\tdef{$j$-components} for short) of $H$ are equivalence classes of the relation $\sim_j$ defined by $J_1 \sim_j J_2$ if and only if $J_1$ and $J_2$ are $j$-connected. In other words, a $j$-component is a maximal collection of $j$-sets that are pairwise $j$-connected. Given a $j$-component $\cJ$, it also comes naturally with a set of hyperedges $\cK_{\cJ}$, which are the hyperedges containing any of the $j$-sets in $\cJ$. In a slight abuse of terminology, we say that the $j$-component $\cJ$ also contains the hyperedges $\cK_{\cJ}$. The \tdef{order} of a $j$-component denotes the number of $j$-sets it contains, and the \tdef{size} of a $j$-component denotes the number of hyperedges it contains. A \tdef{hypertree $j$-component}\label{term:hypertree} (a \tdef{hypertree} for short) is a $j$-component that contains as many $j$-sets as possible {\em given its size}, \textit{i.e.}\ if it has size $s$ and order $t$, then $t=1+\left(\binom{k}{j}-1\right)s$. The case $k=2$ and $j=1$ corresponds to the classical concepts in a graph.

We denote by $\cH^k(n,p)$ the \tdef{random $k$-uniform hypergraph} with vertex set $[n]$, in which each hyperedge is present with probability $p$ independently. When parameters are clear from the context, we use $\cH$ as a shorthand 
for $\cH^k(n,p)$. The following higher-dimensional analogue of the random graph phase transition for $\cH$ and $j$-connectedness was obtained in \cite{CKKgiant, CooleyKangPerson18}.
\begin{thm}[\cite{CKKgiant, CooleyKangPerson18}]\label{thm:gianthyper}
Given integers $k\geq 2$ and $1\leq j \leq k-1$, let $\eps=\eps(n)>0$ satisfy $\eps\rightarrow 0$, $\eps^3 n^j\rightarrow \infty$ and $\eps^2 n^{1-2\delta} \rightarrow \infty$, as $n\to\infty$, for some constant $\delta >0$. Let 
\[
\bar{p}_0 := \left( \binom{k}{j} - 1 \right)^{-1} \binom{n}{k-j}^{-1}.
\]
\begin{itemize}
\item[(1)]  If $p=(1-\eps)\bar{p}_0$, then whp all $j$-components of $\cH^k(n,p)$ have order $O(\eps^{-2}\log n)$.
\item[(2)] If $p=(1+\eps)\bar{p}_0$, then whp the order of the largest $j$-component of $\cH^k(n,p)$ is $(1\pm o(1))\frac{2\eps}{\binom{k}{j}-1}\binom{n}{j}$, while all other $j$-components have order $o(\eps n^j)$.
\end{itemize}
\end{thm}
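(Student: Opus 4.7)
The plan is to study a $j$-component containing a fixed initial $j$-set $J_0$ via a breadth-first search (BFS) on $j$-sets, coupled with a Galton--Watson branching process whose offspring mean is $1\pm\eps$. At each BFS step I would expose the random hyperedges of $\cH$ that contain the currently processed $j$-set together with $k-j$ hitherto untouched vertices; each such hyperedge contributes $\binom{k}{j}-1$ new $j$-sets (the remaining $j$-subsets of the hyperedge), which become children of the current $j$-set. The number of such hyperedges is binomially distributed with parameters $\binom{n-u}{k-j}$ and $p$, where $u$ denotes the number of already-touched vertices; as long as $u=o(n)$ the offspring mean is $\bigl(\binom{k}{j}-1\bigr)\binom{n-u}{k-j}p = (1\pm\eps)(1-o(1))$, and the offspring distribution is asymptotically $\bigl(\binom{k}{j}-1\bigr)\cdot\mathrm{Po}\bigl((1\pm\eps)/(\binom{k}{j}-1)\bigr)$, whose second factorial moment tends to $\binom{k}{j}-1$.

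For the subcritical part~(1), the BFS is stochastically dominated from above by a subcritical Galton--Watson tree whose total progeny has an exponential tail, $\Pr(\text{progeny}\ge t)\le\exp(-\Omega(\eps^2 t))$. Choosing $t=C\eps^{-2}\log n$ with $C$ large enough makes this $o(n^{-j})$, and a union bound over all $\binom{n}{j}$ starting $j$-sets shows that whp every $j$-component has order at most $C\eps^{-2}\log n = O(\eps^{-2}\log n)$. The coupling remains valid throughout because the exploration touches at most $tk=o(n)$ vertices.

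For the supercritical part~(2), I would combine the BFS/branching-process analysis with a sprinkling (two-round exposure) argument. Writing $p=p'+p''(1-p')$ with $p'=(1+\eps')\bar{p}_0$ and $\eps'=\eps(1-o(1))$, supercritical branching-process theory gives a survival probability $\rho\approx 2\eps/(\binom{k}{j}-1)$, since expanding the fixed-point equation for the offspring PGF near $1$ yields $\rho\approx 2\eps/\EX[X(X-1)]$ and $\EX[X(X-1)]\to\binom{k}{j}-1$. A first-moment computation then shows the expected number of $j$-sets whose BFS reaches size at least $\omega\eps^{-2}\log n$ is $(\rho+o(1))\binom{n}{j}$, a second-moment estimate concentrates this count, and the subcritical-style tail bound shows all remaining $j$-sets lie in components of order $O(\eps^{-2}\log n)$. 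Finally, for any two such ``large'' components, a Chernoff bound on the number of sprinkled hyperedges joining them in the second round shows they merge whp, forcing all large components into a single giant $j$-component of order $(1\pm o(1))\rho\binom{n}{j}$.

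The hardest step will be maintaining a clean BFS-branching-process coupling throughout a long exploration: each hyperedge contributes $\binom{k}{j}-1$ new $j$-sets simultaneously, so hyperedges whose ``other'' $j$-subsets collide with already-exposed $j$-sets (or with each other within the same BFS step) must be controlled precisely enough to keep the offspring mean at $1\pm\eps(1-o(1))$ over $\Theta(\eps n^j)$ steps. Quantifying these collision corrections is what dictates the hypotheses $\eps^3 n^j\to\infty$ (so that the branching-process drift dominates its fluctuations) and $\eps^2 n^{1-2\delta}\to\infty$ (ensuring the first-to-second-moment ratio is tight enough for concentration in the sprinkling step).
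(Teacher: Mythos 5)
First, a point of orientation: Theorem~\ref{thm:gianthyper} is not proved in this paper at all --- it is quoted from the earlier works of Cooley, Kang and Person and of Cooley, Kang and Koch --- so your sketch has to be measured against those arguments, which do indeed follow the broad outline you propose (exploration of a $j$-component coupled with branching processes, plus a two-round exposure to merge large pieces). Your subcritical part~(1) is essentially sound: the upper coupling of the search process by a branching process whose children arrive in batches of $\binom{k}{j}-1$ is exactly the coupling this paper itself uses (Lemma~\ref{lem:branchcoupling}), the total-progeny tail $\exp(-\Omega(\eps^2 t))$ is standard for offspring mean $1-\eps$ with bounded batch size, and a union bound over the $\binom{n}{j}$ starting $j$-sets gives the claimed $O(\eps^{-2}\log n)$.

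The genuine gap is in part~(2), in the merging step and in the lower coupling. You propose to join any two ``large'' components by a Chernoff bound on the number of sprinkled hyperedges containing a $j$-set of each. For $j\ge 2$ this is not a routine count: a single hyperedge can contain $J_1$ and $J_2$ only if $|J_1\cup J_2|\le k$, so when $2j>k$ two disjoint $j$-sets can never be joined by one sprinkled hyperedge, and in general the number of potential connecting hyperedges (or short connecting paths) between two families of $j$-sets depends not on their cardinalities alone but on how those $j$-sets are spread over the vertex set; two components of the same order can have vastly different numbers of potential bridges if their $j$-sets are concentrated on few vertices. Establishing that the exploration produces components whose $j$-sets are sufficiently spread out is the central technical work of the cited supercritical proofs (a ``smooth boundary''-type analysis), and it is precisely the issue this paper flags in its concluding remarks: for $j\ge 2$ what matters is the distribution of the giant's $j$-sets across the vertices, not merely their number. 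Without such a structural input, the Chernoff step has no valid lower bound on the number of candidate connecting hyperedges. Relatedly, your first-moment claim that $(\rho+o(1))\binom{n}{j}$ many $j$-sets reach size $\omega\eps^{-2}\log n$ requires a matching \emph{lower} coupling of the exploration by a still-supercritical branching process over $\Theta(\eps^{-2}\log n)$ (and later $\Theta(\eps n^j)$) steps; you correctly identify the collision control needed for this as ``the hardest step,'' but you do not supply it, and for $j\ge 2$ the blocked queries are governed by vertex overlaps with \emph{all} previously discovered $j$-sets rather than by their number, which is exactly where the naive $u=o(n)$ bookkeeping from the graph case breaks down. So the skeleton is right, but the two high-order-specific ingredients --- quantitative collision/boundary control for the coupling, and the spread-of-$j$-sets input that makes sprinkling work --- are missing, and they are the substance of the cited proofs.
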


The aim of this paper is to strengthen Theorem~\ref{thm:gianthyper} in view of Theorem~\ref{thm:giantgraph} by taking a closer look at the {\em subcritical} case and addressing the following natural questions.
\begin{itemize}
\item[(a)] What are the precise asymptotic order and size of the largest $j$-component of $\cH^k(n,p)$? 
\item[(b)] What does the largest $j$-component look like? Is it whp a hypertree or some other more complex structure? 
\end{itemize}

\subsection{Main result} \label{sec:intro:mainresult}

In Theorems~\ref{thm:giantgraph} and~\ref{thm:gianthyper}, the order of a $j$-component  was studied.
In this paper, we concentrate on the size of a $j$-component, \textit{i.e.}\ the number of hyperedges
it contains. Therefore, whenever we talk about the \emph{$i$-th largest $j$-component},
the ranking is determined by the size
rather than the order (\textit{i.e.}\ the number of $j$-sets it contains).
Observe that in the range of the hyperedge probability in our study, whp the order and size of the largest $j$-component only differ roughly by a multiplicative constant $c_0 = \binom{k}{j} - 1$. As a consequence, we also derive the order of the largest $j$-components. 

Before stating our main result, we introduce the following notation. Let $(X_n)_{n\ge 0}$ be a sequence of random variables and $(a_n)_{n\ge 0}$ a sequence of positive real numbers. We say that $X_n=O_p(a_n)$ if for every $\gamma > 0$ there exist $C_\gamma$ and $n_0 \in \mathbb{N}$ such that $\Pr \left(|X_n|\leq C_\gamma\right) > 1 - \gamma$ for every $n\geq n_0$. It is easy to see that $X_n=O_p(a_n)$ if and only if $\Pr \left( |X_n| \leq \omega a_n \right) \rightarrow 1$ for every function $\omega=\omega(n) \rightarrow \infty$.
\begin{thm} \label{thm:mainthm}
Given integers $k \geq 2$ and $1 \leq j \leq k-1$, and $\eps=\eps(n)$ with $0 < \eps < 1$, $\eps^4 n^j \to \infty$ and $\eps^2 n^{k-j} (\log n)^{-1} \to \infty$, let 
\[ c_0=\binom{k}{j}-1  \;\; \mathrm{and} \;\; \pg = c_0^{-1} \binom{n-j}{k-j}^{-1}. \label{par:c0andp0} \] 
For $i \in \mathbb{N}$, let $\mathcal{L}_i=\mathcal{L}_i(\cH^k(n,p))$ be the $i$-th largest $j$-component of $\cH^k(n,p)$, and $L_i$\label{var:Li} its size. If $p = (1-\eps)\pg$, then for any fixed $m \in \mathbb{N}$, whp for any $1 \leq i \leq m$, $\mathcal{L}_i$ is a hypertree component, with size
\[
L_i = \delta^{-1}\left(\log \lambda - \frac{5}{2}\log\log \lambda + O_p(1)\right),
\]
where $\delta = -\eps - \log (1-\eps)= \eps^2 / 2 + O(\eps^3)$\label{par:delta} and $\lambda = \eps^3 \binom{n}{j}$\label{par:lambda}. 
\end{thm}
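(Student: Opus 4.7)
The plan is to follow the two-step strategy used classically for the subcritical random graph: first determine the size of the largest \emph{hypertree} $j$-component via sharp first- and second-moment arguments, and then show separately that whp no non-hypertree (``complex'') $j$-component is large enough to intrude on the ranking.

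For the hypertree part, let $X_s$ be the number of hypertree $j$-components of size exactly $s$ in $\cH$. Since each new hyperedge of a hypertree attaches to the existing structure along a single $j$-set, a hypertree of size $s$ has exactly $t=1+c_0 s$ many $j$-sets and $j+(k-j)s$ vertices. Writing
\[
\EX[X_s] \;=\; \binom{n}{j+(k-j)s}\, T(s)\, p^s\, (1-p)^{\mathrm{ext}(s)},
\]
where $T(s)$ is the number of labelled hypertrees of size $s$ on a fixed vertex set of the right size and $\mathrm{ext}(s)$ counts absent potential hyperedges meeting some $j$-set of the component but not contained in it, there are two quantitative inputs to control. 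The first is a Cayley-type enumeration of $T(s)$, which I would derive either via a generating-function identity analogous to $T(z)=z\exp(T(z))$ or via a bijection with rooted hypertrees. The second is a careful expansion of $\mathrm{ext}(s)$, showing it equals $(1+c_0 s)\binom{n-j}{k-j}$ up to lower-order corrections. Combining these with Stirling and the expansion $\log((1-\eps)e^{\eps})=-\delta$ should give
\[
\EX[X_s] \;\sim\; \frac{C}{s^{5/2}}\,\binom{n}{j}\, e^{-\delta s}
\]
for $s$ in the relevant range. Solving $\EX[X_{\ge s}] = \Theta(1)$ then yields exactly the announced critical size $L^*:=\delta^{-1}\bigl(\log\lambda - \tfrac{5}{2}\log\log\lambda\bigr)$, with $\lambda=\eps^3\binom{n}{j}$ absorbing the prefactor $\binom{n}{j}$.

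Once this is in hand, the upper bound is a routine Markov argument: summing $\EX[X_s]$ over $s \ge L^* + \omega\delta^{-1}$ with $\omega\to\infty$ shows that whp no hypertree component is larger than this threshold. For the matching lower bound and the extension to any fixed $i \le m$, I would run a second-moment calculation on $X_s$ in a short window of sizes near $L^*$: the cross-terms $\EX[X_s(X_s-1)]$ are dominated by disjoint pairs of hypertree components, which gives $\EX[X_s^2] = (1+o(1))\EX[X_s]^2$. Upgrading this to a Poisson-approximation statement (e.g.\ via the method of moments on indicators of distinct hypertree components) establishes both the $O_p(1)$ fluctuations and the existence of at least $m$ disjoint hypertree components in the window.

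The main obstacle will be the last step: proving that whp every $j$-component of size close to $L^*$ is actually a hypertree. I would handle this in two layers. First, a first-moment bound on components of a given deficit $\ell := 1 + c_0 s - t \ge 1$: analogous enumeration and embedding arguments should produce an expected count smaller by a factor of roughly $(s/N)^\ell$ (for an appropriate $N$ of order $n^{k-j}$) than the corresponding hypertree count, so that for any $s_0 \to \infty$ with $s_0 = o(L^*)$, the expected number of non-hypertree components of size in $[s_0,\, L^* + \omega\delta^{-1}]$ tends to $0$. Second, to rule out very large sparse non-hypertree components that the crude first moment cannot kill on its own, I would invoke the exploration-and-coupling argument with a subcritical Galton--Watson branching process from \cite{CKKgiant, CooleyKangPerson18}, which already shows that whp every $j$-component has size $O(\eps^{-2}\log n)$. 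Combining the two layers confines any non-hypertree component to size well below the hypertree window, so that the top $m$ $j$-components by size must be hypertrees falling within the window analysed above, and the theorem follows.
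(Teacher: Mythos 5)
Your overall strategy coincides with the paper's in outline: sharp first and second moments for hypertree components plus a separate argument excluding large non-hypertree components, and your bookkeeping variants (direct enumeration of labelled hypertrees on a chosen vertex set instead of the paper's rooted two-type trees via Lagrange inversion and the Lambert $W$-function in Lemma~\ref{lem:branchingconfigs}; a second moment on component counts with a Poisson step instead of the paper's second moment on the number of $j$-sets in a size window) are legitimate. The genuine gap is in the step you yourself call the main obstacle, and it is quantitative rather than merely technical. For components of positive deficit (equivalently, components containing a wheel), the per-deficit cost is \emph{not} of order $s/n^{k-j}$. A repeated $k$-set adjacent to its twin indeed costs about $s\,n^{j-k}$, and a repeated $k$-set far from its twin about $s^2 n^{-k}$, but a repeated $j$-set costs about $s^2 n^{-j}$: one chooses two of the $\Theta(s)$ type-$j$ positions to identify and pays roughly $n^{-j}$ for the label coincidence. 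The paper's Lemma~\ref{lem:bound-wheels} gives exactly the correction $O(s n^{j-k})+O(s^2 n^{-j})$, and for $j\geq 2$ the term $s^2 n^{-j}$ is precisely what forces the hypothesis $\eps^4 n^j\to\infty$ at the relevant size $s\asymp\delta^{-1}\log\lambda$. Establishing this requires the careful enumeration of wheels, classified by how many vertices of the initial $j$-set are ``frozen'' along the cycle (Lemma~\ref{lem:wheelconfigs}) and the unicycle decomposition behind Lemmas~\ref{lem:bound-wheels} and~\ref{lem:non-hypertree-bound}; this is the technical heart of the paper and is not supplied by your sketch, while with your claimed factor $(s/n^{k-j})^{\ell}$ the argument would appear to go through under weaker hypotheses than it actually does.

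The second layer of your exclusion step is also not available as stated. Theorem~\ref{thm:gianthyper}(1) is proved under $\eps\to0$, $\eps^3 n^j\to\infty$ and $\eps^2 n^{1-2c}\to\infty$ for some constant $c>0$, and these are not implied by the present hypotheses $0<\eps<1$, $\eps^4 n^j\to\infty$, $\eps^2 n^{k-j}(\log n)^{-1}\to\infty$: for instance $k=5$, $j=3$, $\eps=n^{-1/2}$ is covered by Theorem~\ref{thm:mainthm} but not by Theorem~\ref{thm:gianthyper}, and for larger $j$ and $k-j$ even $\eps\ll n^{-1}$ is allowed, where the distinction between $\pg$ and $\bar{p}_0$ matters. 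So the $O(\eps^{-2}\log n)$ bound cannot be invoked as a black box; one has to rerun the coupling with the two-type branching process and the tree enumeration under the present conditions, which is exactly the paper's upper-bound argument (Lemmas~\ref{lem:branchcoupling}, \ref{lem:branchingconfigs}, \ref{lem:upper-bound}) -- and once that is done it bounds \emph{all} components, making your hypertree-only Markov step redundant. A minor additional point: such a crude bound only confines components to $C\eps^{-2}\log n$ for an unspecified constant $C$, so your deficit-based first moment would have to cover sizes up to that level, not merely up to $L^*+\omega\delta^{-1}$; this is harmless once the correct wheel enumeration is in place (the paper's Lemma~\ref{lem:non-hypertree-bound} handles all $s$ with $s\delta\to\infty$), but it needs to be said.
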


We note that the critical probability $p_0$ differs from  $\bar{p}_0$ (defined in Theorem~\ref{thm:gianthyper}) by a factor of $1+O(n^{-1})$. This is because we analyse a range closer to criticality, which requires a more precise value of $p_0$.

We also note that the coefficient $-5/2$ before the $\log\log\lambda$ factor in Theorem~\ref{thm:mainthm} is the same as that in Theorem~\ref{thm:giantgraph} for graphs, and arises from the universal asymptotic behaviour of various families of labelled trees, \textit{i.e.}\ connected acyclic graphs. More precisely, the asymptotic number of trees on $t$ vertices in such a family has the form $c \cdot t! \gamma^t t^{-5/2}$, with $c$ and $\gamma$ depending on the precise nature of the family. The proofs of both Theorem~\ref{thm:giantgraph} and Theorem~\ref{thm:mainthm} involve asymptotic counting of such families of trees, and the coefficient $-5/2$ comes from the common polynomial factor $t^{-5/2}$. In the case when the trees are rooted, which is more commonly considered, the exponent $-5/2$ would become $-3/2$ (see \cite[Section~VII.3]{FlajoletSedgewickBook}) -- the extra factor of $t$ comes from the choice of the root.

Since Theorem~\ref{thm:mainthm} states that whp the largest $j$-components are \emph{hypertrees}, as a corollary we can determine their order and obtain the following result.

\begin{coro} \label{cor:comporder}
    Let $k,j,\eps,p,\delta,\lambda,\mathcal{L}_i$ be given as in Theorem~\ref{thm:mainthm}.
    For $i \in \mathbb{N}$, let $M_i=M_i\left( \cH^{k}(n,p) \right)$ be the order of $\mathcal{L}_i$.
    Then for any fixed $m \in \mathbb{N}$, whp for any $1\leq i \leq m$ we have
    \[
    M_i = c_0 \delta^{-1}\left(\log \lambda - \frac{5}{2}\log\log \lambda + O_p(1)\right).
    \]
   
\end{coro}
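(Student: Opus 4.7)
The corollary is an immediate structural consequence of Theorem~\ref{thm:mainthm}, so my plan is very short and essentially algebraic. Theorem~\ref{thm:mainthm} already guarantees that, whp, for every $1 \le i \le m$ the component $\mathcal{L}_i$ is a hypertree $j$-component. By the definition of hypertree recalled on page~\pageref{term:hypertree}, a hypertree of size $s$ has order exactly $1 + c_0 s$. Hence, on the high-probability event that all the $\mathcal{L}_i$ are hypertrees, we deterministically have
\[
M_i \;=\; 1 + c_0 L_i
\]
for every $1 \le i \le m$.

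Substituting the estimate for $L_i$ from Theorem~\ref{thm:mainthm} then gives
\[
M_i \;=\; 1 + c_0 \delta^{-1}\!\left(\log \lambda - \tfrac{5}{2}\log\log \lambda + O_p(1)\right).
\]
The constant term $1$ is $O(1)$ and hence certainly $O_p(1)$; multiplying an $O_p(1)$ quantity by the constant $c_0$ leaves it $O_p(1)$; and the sum of two $O_p(1)$ quantities is $O_p(1)$. Absorbing the additive $1$ into the $O_p(1)$ error inside the parenthesis therefore yields the claimed expression.

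There is essentially no obstacle: the only thing to be careful about is that the statement ``whp $\mathcal{L}_i$ is a hypertree for all $1 \le i \le m$'' is uniform in $i$ (which it is, since $m$ is fixed and a union bound over finitely many indices preserves whp), so that the identity $M_i = 1 + c_0 L_i$ may be applied simultaneously for all $i \le m$ on the same high-probability event used in Theorem~\ref{thm:mainthm}.
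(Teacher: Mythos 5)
Your proposal is correct and matches the paper's own (essentially one-line) justification: since Theorem~\ref{thm:mainthm} gives that whp each $\mathcal{L}_i$, $1\le i\le m$, is a hypertree, the defining relation $M_i = 1 + c_0 L_i$ holds simultaneously on that event, and substituting the size estimate yields the claim. The only point worth noting is that absorbing the additive $1$ into the parenthesis uses $\delta/c_0 = O(1)$, which holds since $\delta = \eps^2/2 + O(\eps^3)$ is bounded in the considered range, so your argument is complete.
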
 

Note that when $k=2$ and $j=1$ (\textit{i.e.}\ the graph case) Corollary~\ref{cor:comporder} gives exactly $M_1=\delta^{-1} \left( \log \lambda - \frac{5}{2} \log \log \lambda + O_p(1) \right)$, as stated in Theorem~\ref{thm:giantgraph}-(1).

\subsection{Related work} \label{sec:intro:relwork}

The case $j=1$ of $j$-tuple-connectedness corresponds to vertex-connectedness of $\cH^k(n,p)$ and is the most studied among the higher-dimensional analogues of the phase transition in $\mathcal{G}(n,p)$. Enumeration results for the asymptotic number of $1$-connected $k$-uniform hypergraphs were obtained by Karo\'nski and {\L}uczak \cite{KaronskiLuczak97}, later improved by Andriamampianina and Ravelomanana \cite{AndriamaRavelomanana05} via enumerative techniques. 
The threshold for the emergence of the giant $1$-component, \textit{i.e.}\ $p=(k-1)^{-1} \binom{n-1}{k-1}^{-1}$, was first determined by Schmidt-Pruzan and Shamir \cite{SchmidtShamir85}. 
Subsequently, Karo\'nski and {\L}uczak \cite{KaronskiLuczak02} studied the distribution of the order of the largest component in the early supercritical regime. The studied range was extended by Ravelomanana and Rijamamy \cite{RavelomananaRijamamy2006}, although they only computed the expected order of the largest component and not its distribution. 
Behrisch, Coja-Oghlan, and Kang \cite{BehrischCojaOghlanKang10b,BCOK14} provided central and local limit theorems for $p(k-1)\binom{n-1}{k-1}> 1+\eps$, with $\eps>0$ arbitrarily small but fixed, while more recently Bollob\'as and Riordan \cite{BollobasRiordan12c} showed that the distribution of the order of the largest component tends to a normal distribution for every $\eps = \omega(n^{-1/3})$. 

Vertex-connectedness was also studied for a model of non-uniform random hypergraphs, in which the probability for a hyperedge of size $t$ to belong to the hypergraph depends on a parameter $\omega_t$.
In particular, de Panafieu \cite{dePanafieu15} determined the critical value at which the first \emph{complex} component (\textit{i.e.}\ connected component with more than one cycle) appears.

In contrast, the case $j\geq2$ is not yet well-understood. Cooley, Kang, and Person \cite{CooleyKangPerson18} determined the threshold $\bar{p}_0$ for the appearance of the giant $j$-component and subsequently Cooley, Kang, and Koch \cite{CKKgiant} refined this result by determining the asymptotic order of the largest $j$-component after the threshold and by showing that the second largest component has much smaller order (see Theorem~\ref{thm:gianthyper}). However, the subcritical regime was not analysed, which is the aim of this paper.

Moreover, the analysis of the vertex-connectedness case shows a symmetry property: the supercritical hypergraph with the giant component removed behaves as a subcritical hypergraph (with slightly modified parameters).  It is not immediately clear that such a behaviour should also hold in the higher-dimensional case ($j\geq 2$) and we believe that our study will help to obtain results in this direction.

\section{Proof of Theorem~\ref{thm:mainthm}}

In order to prove Theorem~\ref{thm:mainthm}, we bound the size of the largest $j$-component from above and below: we prove that whp there is no $j$-component of size larger than the claimed value (Lemma~\ref{lem:upper-bound}) and that for any size smaller than the claimed value, there is at least one $j$-component (and indeed a hypertree component) with larger size (Lemma~\ref{lem:lower-bound-second-moment}).

In Section~\ref{sec:search}, we define the \emph{component search process} which explores a $j$-component starting from a $j$-set, and a \emph{two-type branching process} which gives an upper coupling on the search process (Lemma~\ref{lem:branchcoupling}). We will make use of these processes to obtain the following bound on sizes of $j$-components:
\begin{lem} \label{lem:upper-bound}
Let $k,j,\eps,p,\delta,\lambda$ be given as in Theorem~\ref{thm:mainthm}, and $K(n)\rightarrow \infty$. Whp, $\cH^k(n,p)$ contains no $j$-component of size larger than
\[ \delta^{-1}\left(\log \lambda - \frac{5}{2} \log \log \lambda + K(n)\right).\]
\end{lem}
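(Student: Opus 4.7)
The plan is a first-moment argument built on the branching-process coupling of Lemma~\ref{lem:branchcoupling}. Setting $s^{\ast} := \delta^{-1}\bigl(\log \lambda - \tfrac{5}{2}\log\log\lambda + K(n)\bigr)$, it suffices to show that the expected number of $j$-components of size at least $s^{\ast}$ is $o(1)$, at which point Markov's inequality closes the argument.

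First, I would fix a $j$-set $J_0$ and apply the coupling: the number of hyperedges discovered by the component search starting from $J_0$ is stochastically dominated by $T_H$, the total number of hyperedges produced by the two-type branching process. This process has mean $(1-\eps)$ offspring per $j$-set, deterministically $c_0$ $j$-set children per hyperedge, and the resulting rigid identity $T_J = 1 + c_0 T_H$. A standard generating-function analysis---either a singularity analysis of the tree generating function, or a cycle-lemma/Lagrange-inversion computation---produces the local estimate
\[
\Pr(T_H = t) \;\le\; (c+o(1))\, t^{-3/2}\, e^{-\delta t}
\]
uniformly in the range of interest, with $c>0$ explicit and with exponential rate $\delta = -\eps - \log(1-\eps)$ exactly as in the lemma statement.

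The naive union bound $\binom{n}{j}\Pr(T_H \ge s^{\ast})$ is too weak by a factor of order $s^{\ast}$, because every $j$-set of a size-$t$ component contributes independently to the sum. To correct this I would use the hypertree identity $|\cJ| = 1 + c_0 |\cK_{\cJ}|$: a hypertree $j$-component of size $t$ contains exactly $1 + c_0 t$ distinct $j$-sets, so
\[
\mathbb{E}\bigl[\#\{\text{hypertree $j$-components of size }t\}\bigr] \;\le\; \frac{\binom{n}{j}\,\Pr(T_H = t)}{1 + c_0 t} \;\lesssim\; \binom{n}{j}\, t^{-5/2}\, e^{-\delta t}.
\]
Summing over $t \ge s^{\ast}$ and substituting $e^{-\delta s^{\ast}} = \lambda^{-1}(\log\lambda)^{5/2}\, e^{-K(n)}$ together with $\binom{n}{j} \asymp \eps^{-3}\lambda$ and $s^{\ast} \asymp \delta^{-1}\log\lambda$, the expected number collapses to $O(e^{-K(n)}) = o(1)$, which is precisely the bound needed.

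The main obstacle is disposing of large \emph{non-hypertree} $j$-components, for which the lower bound $|\cJ| \gtrsim c_0 t$ can fail (in principle $|\cJ|$ can be as small as $\Theta(\sqrt{t})$), so the conversion in the display above does not apply directly. I would handle this by an auxiliary first-moment calculation on the number of ``excess'' hyperedges produced in the search: every extra hyperedge beyond the hypertree skeleton requires a specific collision with a $j$-set already discovered, and each such coincidence contributes a factor of order $\pg\cdot n^{-\Omega(1)}$ to the enumeration, so the expected number of $j$-components of size $\ge s^{\ast}$ with more than a bounded amount of excess structure is already $o(1)$. The delicate technical point of the whole argument is producing the sharp polynomial exponent $-3/2$ in the local estimate on $T_H$, because the extra $t^{-1}$ factor from the component/$j$-set conversion then yields precisely the $t^{-5/2}$ that is responsible for the sharp $-\tfrac{5}{2}\log\log\lambda$ correction; a cruder Cram\'er-type exponential tail on $T_H$ would produce only a $-\tfrac{3}{2}\log\log\lambda$ correction and miss the lemma's claim.
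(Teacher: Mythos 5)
Your proposal has the same skeleton as the paper's proof: couple the component search with the two-type branching process (Lemma~\ref{lem:branchcoupling}), prove a local estimate of order $t^{-3/2}e^{-\delta t}$ by enumerating the branching-process trees (this is exactly Lemma~\ref{lem:branchingconfigs} combined with the probability weights, which the paper also obtains via Lagrange inversion), gain an additional factor $t^{-1}$ by counting components rather than starting $j$-sets, then sum and apply Markov; your closing remark that a crude tail bound would only yield $-\tfrac{3}{2}\log\log\lambda$ is exactly the right diagnosis. The genuine difference is \emph{where} the factor $t^{-1}$ is extracted. The paper extracts it at the level of branching-process instances: it counts type-$j$ vertices in the instances of $\cT$ of size $s$ and divides by $\Theta(s)$, using that every instance of size $s$ is by construction a tree with exactly $1+c_0 s$ type-$j$ vertices, so no structural information about the component is needed and wheels play no role in the upper bound (they enter only in the lower bound, Lemmas~\ref{lem:bound-wheels} and~\ref{lem:non-hypertree-bound}). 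You extract it at the component level via the hypertree identity $|\cJ|=1+c_0|\cK_{\cJ}|$, and your worry is well founded: for a general $j$-component of size $t$ the number of $j$-sets can indeed be far smaller than $c_0 t$, so any argument converting $j$-set counts into component counts must dispose of dense components. Your proposed excess/collision first moment is essentially the paper's Lemma~\ref{lem:non-hypertree-bound} (wheel enumeration plus the wheel-based branching process), i.e.\ a substantial extra piece of work that your route requires for this lemma and that you have only sketched.

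Two concrete points need repair. First, the coupling gives only stochastic domination, so for a $j$-set in a hypertree component of size exactly $t$ you may conclude $T_H\ge t$, not $T_H=t$; your displayed per-size inequality is therefore not justified as written. The clean fix is to aggregate: the number of hypertree components of size at least $s^{\ast}$ is at most $(1+c_0 s^{\ast})^{-1}$ times the number of $j$-sets lying in such components, and each of those $j$-sets has $T_H\ge s^{\ast}$, giving an expected count of order $\binom{n}{j}\,\delta^{-1}(s^{\ast})^{-5/2}e^{-\delta s^{\ast}}=\Theta\bigl(e^{-K(n)}\bigr)$. (Replacing $\Pr(T_H=t)$ by the tail $\Pr(T_H\ge t)$ term by term and then summing over $t\ge s^{\ast}$ would instead cost an extra factor $\delta^{-1}=\Theta(\eps^{-2})$, which $e^{-K(n)}$ cannot absorb since $K(n)$ may grow arbitrarily slowly.) Second, your case split is incomplete: components that are not hypertrees but have only a bounded, nonzero amount of excess are covered neither by the hypertree display nor by your auxiliary bound on components with ``more than a bounded amount'' of excess. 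This is easy to patch --- either note that $x$ excess hyperedges still leave at least $1+c_0(t-x)=\Omega(t)$ distinct $j$-sets, so the same division applies when $x$ is bounded (or even $x\le c_0 t/2$), or prove outright, as in Lemma~\ref{lem:non-hypertree-bound}, that whp there is no non-hypertree component of size at least $s^{\ast}$ --- but as stated the decomposition does not cover all components. With these two repairs, and with the auxiliary excess estimate carried out rigorously, your plan does yield the lemma.
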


Lemma~\ref{lem:upper-bound} will be proved in Section~\ref{sec:upper}. To this end, we bound from above (and below) the number of possible instances of the two-type branching process of a certain size (Lemma~\ref{lem:branchingconfigs}), which are the so-called \emph{rooted labelled two-type trees}. 
Using the coupling argument of Lemma~\ref{lem:branchcoupling}, we obtain an upper bound on the expected number of $j$-components whose size is  larger than a fixed value. 
We conclude by applying Markov's inequality to prove that if we run (at most) $\binom{n}{j}$ component search processes (each one starting from a different $j$-set), whp no component of size larger than the claimed value will be discovered.

In Section~\ref{sec:lower}, we obtain the lower bound on the size of the largest $j$-component and show that it is indeed a hypertree.

\begin{lem} \label{lem:lower-bound-second-moment}
  Let $k,j,\eps,p,\delta,\lambda$ be given as in Theorem~\ref{thm:mainthm}, and $K(n) \to \infty$. For any constant $m \in \mathbb{N}$, whp the largest $m$ components of $\cH^k(n,p)$ are hypertree components, each of size at least
  \[ \delta^{-1} \left( \log \lambda - \frac{5}{2} \log \log \lambda - K(n)\right). \]
\end{lem}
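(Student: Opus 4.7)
The plan is to prove this by a second moment method on the count of hypertree $j$-components of a suitable size, coupled with the upper bound already established in Lemma~\ref{lem:upper-bound}.

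\textbf{First moment.} Fix $s^* = \delta^{-1}\left(\log\lambda - (5/2)\log\log\lambda - K(n)\right)$ and let $X_s$ denote the number of hypertree $j$-components of size exactly $s$, for $s$ in a short window around $s^*$. To estimate $\EX[X_s]$ I would (i) choose which $j$-sets of $[n]$ appear in the hypertree (the vertex set of the component will have $O(s)$ vertices), (ii) count hypertree structures on $t=1+c_0 s$ $j$-sets by invoking an enumeration result for labelled $(k,j)$-hypertrees, which (as foreshadowed by the authors following Theorem~\ref{thm:mainthm}) should have the shape $c_1 \, t!\, \gamma^t t^{-5/2}$ for constants $c_1,\gamma$ depending only on $k,j$, (iii) multiply by $p^s$ for the $s$ prescribed hyperedges, and (iv) multiply by the ``isolation'' factor $(1-p)^{N}$, where $N=(1+o(1))c_0 t \binom{n-j}{k-j}$ counts potential hyperedges that would extend the component. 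Expanding $\log(1-\eps) = -\eps - \delta$, the dominant dependence on $s$ collapses to $s^{-5/2} e^{-\delta s}$ times a polynomial pre-factor in $n$. Plugging in $s=s^*$ and using $K(n)\to\infty$ gives $\EX[X_{s^*}]\to\infty$ (indeed to any prescribed polynomial rate in $\lambda$).

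\textbf{Second moment.} Writing $X_s = \sum_T \mathbf{1}_T$ over potential hypertree components $T$ of size $s$, I would split $\EX[X_s^2]$ according to the $j$-set/vertex overlap of the two hypertrees $T_1,T_2$. For \emph{vertex-disjoint} pairs the joint isolation event differs from the product of the two single-component isolation events only through the few hyperedges that straddle both components; since each hypertree occupies $O(\eps^{-2}\log n) = o(n)$ vertices, this correction is $1+o(1)$, and disjoint pairs contribute $(1+o(1))\EX[X_s]^2$. For pairs sharing at least one $j$-set, the acyclicity intrinsic to hypertree components forces additional structural constraints on the joint configuration, and a careful union bound over the number of shared $j$-sets shows that these contributions are $o(\EX[X_s]^2)$. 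Chebyshev then gives $X_{s^*} \geq (1-o(1))\EX[X_{s^*}]$ whp, so in particular $X_{s^*} \geq m$ whp for any fixed $m$.

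\textbf{Conclusion.} Lemma~\ref{lem:upper-bound} caps every $j$-component at size $\delta^{-1}(\log\lambda-(5/2)\log\log\lambda+K(n))$ whp, so the $\geq m$ hypertree components of size $\geq s^*$ produced above are automatically the $m$ largest $j$-components and lie within an $O_p(1)$-wide window around the claimed value. The main obstacle will be the second moment bookkeeping: accurately tracking how the isolation factors for $T_1$ and $T_2$ interact when the hyperedge sets are close but disjoint, and quantifying the structural savings for overlapping pairs so that they are negligible. This is also where the hypothesis $\eps^4 n^j \to \infty$ (and not merely $\eps^3 n^j \to \infty$, as in Theorem~\ref{thm:gianthyper}) enters, to guarantee that $\EX[X_{s^*}]$ grows fast enough to absorb the second-moment correction terms.
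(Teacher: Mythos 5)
The decisive gap is in your concluding step. Your argument shows that whp there exist at least $m$ hypertree components of size at least $s_* := \delta^{-1}(\log\lambda - \tfrac52\log\log\lambda - K(n))$, and Lemma~\ref{lem:upper-bound} caps all components from above; but this does not give the structural half of the statement, namely that the $m$ largest components \emph{are} hypertrees. Nothing in your proposal excludes a non-hypertree component of size between $s_*$ and the upper bound, and such a component could well be among the $m$ largest, so "automatically the $m$ largest" does not follow. The paper needs a separate ingredient exactly here (Lemma~\ref{lem:non-hypertree-bound}): every non-hypertree component contains a wheel, and by bounding the number of possible wheels (Lemma~\ref{lem:wheelconfigs}) and coupling with a wheel-based branching process one shows that whp no component of size $s^\circ$ with $s^\circ\delta\to\infty$ contains a wheel; since $s_*\delta=\Theta(\log\lambda)\to\infty$, all components of size at least $s_*$ are hypertrees. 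Relatedly, the enumeration you invoke as a black box (labelled $(k,j)$-hypertrees of shape $c\,t!\,\gamma^t t^{-5/2}$) is itself a substantial part of the work: the paper derives it by counting rooted labelled two-type trees via the Lambert $W$-function (Lemma~\ref{lem:branchingconfigs}) and then showing that instances with repeated labels are negligible (Lemma~\ref{lem:bound-wheels}); that is where the hypothesis $\eps^4 n^j\to\infty$ (through $s^2n^{-j}\to0$) actually enters, not in making the first moment large.

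On the second moment, your direct pair decomposition can in principle be made to work, but as sketched it is both incomplete and heavier than necessary. Your case split is not exhaustive for $j\ge 2$: besides vertex-disjoint pairs and pairs sharing a $j$-set (which in fact contribute nothing, since two \emph{distinct} components can never share a $j$-set), there are pairs sharing vertices but no $j$-set, and these are precisely where the straddling-hyperedge correction is most delicate; moreover the correction for disjoint pairs is governed by $p\cdot\#\{\text{straddling $k$-sets}\}=O(s^2 n^{-j})$, not by the vertex count being $o(n)$. The paper sidesteps all of this bookkeeping: it works with $S_+$, the number of $j$-sets in components of size in $[s_*,s^*]$, and bounds $\EX(S_+^2)$ by conditioning on the component $\kappa_1$ of one $j$-set and coupling the exploration from a second $j$-set (ignoring hyperedges meeting $\kappa_1$) with the unconditioned branching process $\cT$, yielding $\EX(S_+^2)\le(1+o(1))\EX(S_+)^2$ with no overlap analysis at all. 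Adopting that conditioning argument, and adding the wheel-based exclusion of large non-hypertree components, would close the gaps.
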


To prove Lemma~\ref{lem:lower-bound-second-moment}, in Section~\ref{sec:lower} we introduce \emph{wheels}, which are a higher-dimensional analogue of cycles in graphs, and bound their number from above  (Lemma~\ref{lem:wheelconfigs}). 
This will allow us to prove that for a certain range of $s$, most of the instances of our two-type branching process of size $s$ correspond to \emph{hypertree components} (Lemma~\ref{lem:bound-wheels}). Thus, we derive the desired lower bound by estimating the expected number of $j$-sets in large hypertree components (Lemma~\ref{lem:lower-bound-j}) and by a second moment argument. Finally, we show that whp in the considered range all components of sufficiently large size are hypertrees (Lemma \ref{lem:non-hypertree-bound}). 

For the reader's convenience, a glossary of some of the most important terminology and notation defined in the paper can be found in Appendix~\ref{ap:glossary}.
\begin{proof}[Proof of Theorem~\ref{thm:mainthm}]
Observe that Theorem~\ref{thm:mainthm} follows directly from Lemmas~\ref{lem:upper-bound} and~\ref{lem:lower-bound-second-moment}.
\end{proof}

\section{Search process and branching process} \label{sec:search}

Throughout the paper, we let $k$, $j$, $\eps$, $p$, $\delta$, $\lambda$, $c_0$, $p_0$ be as given in Theorem~\ref{thm:mainthm}. For positive integers $n\geq k$, we write $n_{(k)}$ for the \tdef{falling factorial} $n_{(k)}:= n(n-1)(n-2)\cdots (n-k+1) = n!/(n-k)!$.

We also introduce auxiliary two-type graphs. A \tdef{two-type graph}\label{term:twotypegr}
is a \emph{connected} bipartite graph on a set of vertices of type $k$ and a set of vertices of type $j$, where each vertex of type $k$ is connected to exactly $\binom{k}{j}$ vertices of type $j$. We only consider two-type graphs with at least one vertex of type $k$. A \tdef{labelled two-type graph}\label{term:labtwotypgr} with label set $[n]$ is a two-type graph with labels on its vertices, where vertices of type $j$ are labelled by $j$-sets of $[n]$ and vertices of type $k$ by $k$-sets of $[n]$ in such a way that to each of the $\binom{k}{j}$ vertices of type $j$ connected to a given vertex of type $k$ with label $K\in \binom{[n]}{k}$ is assigned a distinct $j$-set $J \subset K$ as label. 
There is a natural bijection between the set of labelled two-type graphs with label set $[n]$ whose
vertices all have distinct labels and the set of
\emph{possible components}, \textit{i.e.}\ pairs $(\cJ,\cK)$ which could potentially be the families
of $j$-sets and $k$-sets in a $j$-component of some hypergraph on vertex set $[n]$.
\emph{Acyclic} two-type graphs are called \tdef{two-type trees}\label{term:twotyptree}.

\begin{figure}
  \centering
  \includegraphics[page=1,width=0.8\textwidth]{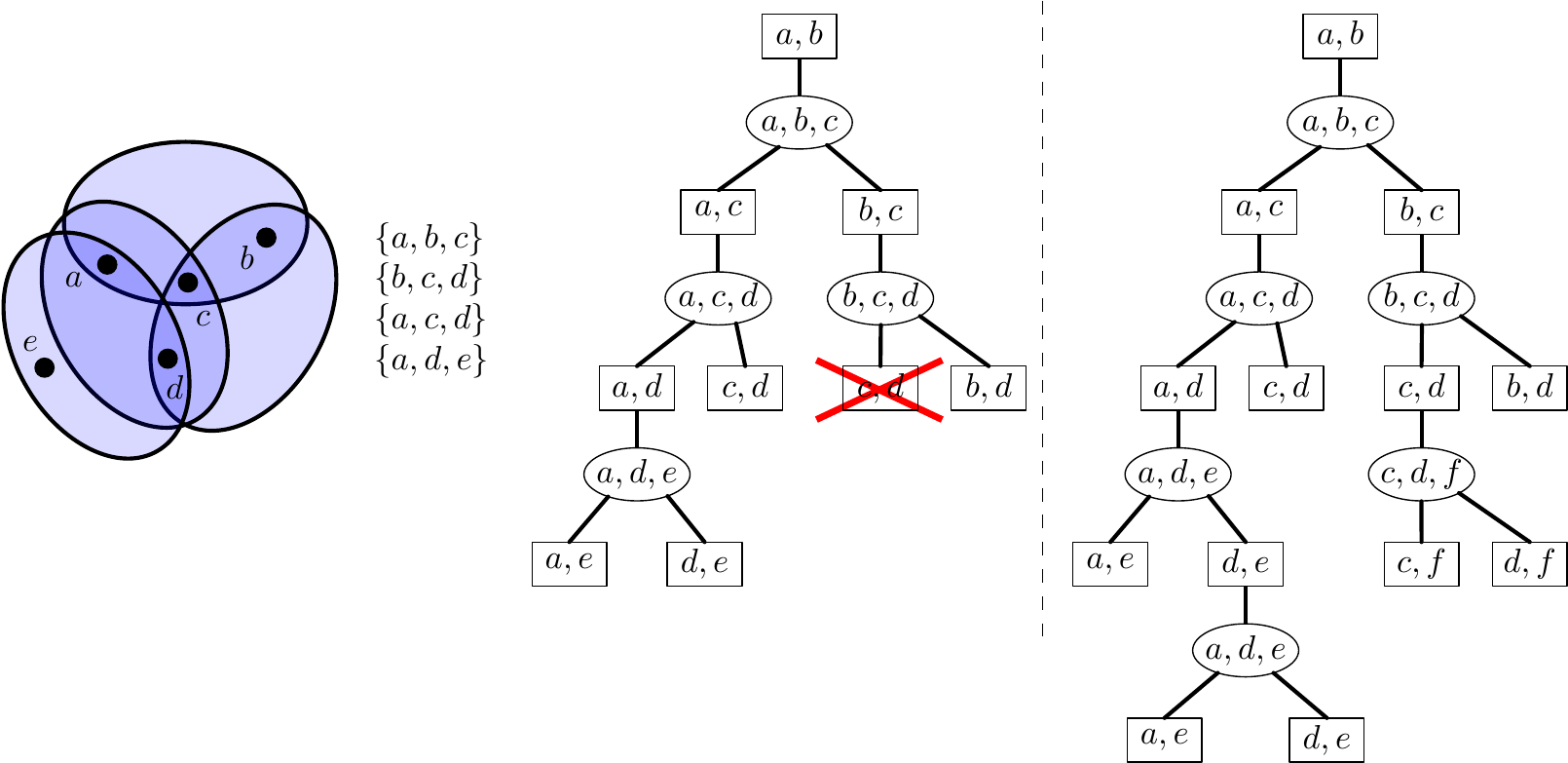}
    \caption{Examples with $k=3$, $j=2$. Left: a $j$-component with the two-type tree of its search process starting from $\{b,c\}$. Right: an instance of the branching process, giving a two-type tree that cannot come from the search process.}
  \label{fig:processes}
\end{figure}

The \tdef{component search process}\label{term:compsearchproc} is defined as follows.
We explore $j$-components in $\cH$ via a breadth-first search algorithm:
we keep track of disjoint sets of \emph{neutral}, \emph{active} and \emph{explored} $j$-sets and $k$-sets.
Furthermore, we refer to $j$-sets and $k$-sets that are either active or explored as \emph{discovered}.
Initially all $j$-sets and $k$-sets are neutral.
We use a queue to stock active
$j$-sets and $k$-sets, and at the start of the algorithm we choose an initial $j$-set $J_0$,
label it as active and add it to the queue.
When the queue is not empty, an element pops out from the queue (in a first-in first-out fashion).
If the popped element is a $j$-set $J_*$, then we consider every $k$-set $K$ containing $J_*$
in arbitrary order, and if $K$ is in $\cH$ but not yet discovered, then we call it active
and add it to the queue, and label $J^*$ as explored;
if the popped element is a $k$-set $K_*$, then we consider every $j$-set $J \subset K_*$ in arbitrary order, and if $J$ is not yet discovered, then we call it \emph{active} and add it to the queue, and label $K^*$ as explored.
We continue until the queue is empty, when we have found the $j$-component containing $J_0$.
To obtain all the $j$-components, we only need to perform the same procedure on neutral $j$-sets in $\cH$ until exhaustion. (An example of the search process is given in Figure~\ref{fig:processes}.)

To prove Lemma~\ref{lem:upper-bound}, we provide in Lemma~\ref{lem:branchcoupling} an upper coupling on the search process with the following \tdef{two-type branching process}\label{term:branchproc}. Each vertex of the branching process is either of type $j$ or of type $k$, and is labelled by a $j$-set or $k$ set of $[n]$, respectively. The branching process constructs a labelled two-type tree with label set $[n]$ in the following way. Given a vertex $u_0$ of type $j$, the branching process begins with $u_0$ (as the root). For each vertex $u$ of type $j$ with label $J$, let $\cK_J:=\{ K\in \binom{[n]}{k} \mid J\subset K \}$ be the set of possible labels of children of $u$. For each label $K \in \cK_J$, independently with probability $p$, we generate a new  vertex $v$ of type $k$ and assign the label $K$ to $v$. For each such new vertex $v$, we then attach $\binom{k}{j}-1$ many new vertices of type $j$ as children of $v$, with distinct labels from $\binom{K}{j}\setminus\{J\}$ (note that different vertices may have the same label). We denote this branching process by $\cT$\label{class:branchproc}. The size of $\cT$ is defined as the number of vertices of type $k$ that it discovers. Note that each instance of $\cT$ corresponds to a labelled two-type tree rooted at a vertex of type $j$, which we call a  \tdef{rooted labelled two-type tree}. An example of the two-type branching process is given in  Figure~\ref{fig:processes}.

\begin{lem} \label{lem:branchcoupling}
We can couple the component search process on $j$-components of $\cH$ from above with $\binom{n}{j}$ copies of $\cT$. In particular, for any given $s\in \mathbb N$, the expected number of $j$-components of $\cH$ of size at least $s$ is not larger than the expected number of instances of $\cT$ of size at least $s$.
\end{lem}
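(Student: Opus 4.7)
The plan is to couple the collection of search processes (one for each possible starting $j$-set $J_0 \in \binom{[n]}{j}$) with $\binom{n}{j}$ copies $\cT_{J_0}$ of the branching process $\cT$ rooted at the corresponding $j$-sets, in such a way that every $k$-set discovered by the search from $J_0$ appears as the label of some type-$k$ vertex in $\cT_{J_0}$. Granting such a coupling, the size of the $j$-component containing $J_0$ is bounded above by the size of $\cT_{J_0}$, and since each $j$-component of size at least $s$ contains at least one $j$-set, we get the pointwise bound
\[
\#\{j\text{-components of }\cH\text{ of size} \geq s\} \leq \sum_{J_0 \in \binom{[n]}{j}} \mathbf{1}[\text{search from } J_0 \text{ has size} \geq s] \leq \sum_{J_0} \mathbf{1}[\cT_{J_0} \text{ has size} \geq s],
\]
so the expectation inequality follows from linearity.

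For the coupling itself, I would fix a sample $\cH \sim \cH^k(n,p)$ with hyperedge indicators $X_K := \mathbf{1}[K \in \cH]$, and build each $\cT_{J_0}$ in BFS order using $\cH$ together with fresh auxiliary randomness. When the BFS of $\cT_{J_0}$ is about to decide, for a type-$j$ vertex $u$ with label $J$ being processed, whether a given $K \in \cK_J$ becomes a type-$k$ child of $u$, I check whether $K$ has already been ``queried'' by an earlier processed type-$j$ vertex of $\cT_{J_0}$ (that is, whether some previously processed type-$j$ vertex has a label contained in $K$). If not, the child-indicator is declared equal to $X_K$; if so, it is drawn as a fresh independent Bernoulli($p$). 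Each child-indicator is then marginally Bernoulli($p$) and the indicators are jointly independent across distinct (vertex, $K$) pairs, so $\cT_{J_0}$ has the correct distribution, while $\cH$ retains its distribution because each $X_K$ is used at most once in this role. A short induction on the BFS step of the search from $J_0$ then shows that whenever the search reveals a $k$-set $K$ through popping some $J_* \subset K$, the label $K$ is already present as a type-$k$ vertex of $\cT_{J_0}$: by the inductive hypothesis $J_*$ has appeared in $\cT_{J_0}$, and the first time $\cT_{J_0}$ queries $K$ (which is at the latest the processing of a type-$j$ vertex with label $J_*$) uses $X_K = 1$ and attaches $K$; moreover, any new $j$-subsets of $K$ that the search then enqueues are automatically present as type-$j$ children of the corresponding type-$k$ vertex in $\cT_{J_0}$.

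Performing this construction for every $J_0$ against the \emph{same} $\cH$ yields the claimed coupling. Note that the $\binom{n}{j}$ copies of $\cT$ produced in this way are not mutually independent, but the expectation inequality uses only linearity of expectation, so independence is unnecessary. The main delicate point---and the step I expect to cost the most care---is precisely the ``first query gets $X_K$, subsequent queries get fresh Bernoullis'' recipe: one must verify that it simultaneously produces a genuine instance of $\cT_{J_0}$ (with independent Bernoulli($p$) child-indicators) and is consistent with the prescribed hyperedge indicators of $\cH$, while still supporting the inductive domination argument above.
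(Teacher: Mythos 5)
Your proposal is correct and follows essentially the same route as the paper: couple the exploration started from each $j$-set $J_0$ with a copy of $\cT$ rooted at a type-$j$ vertex labelled $J_0$, observe that $\cT$ only makes extra queries and adds extra type-$j$ vertices, and conclude by linearity of expectation over the (not necessarily independent) $\binom{n}{j}$ copies. The only difference is one of detail: the paper asserts the coupling informally, whereas you spell it out via the ``first query of $K$ reads $X_K$, later queries use fresh Bernoulli($p$) coins'' deferred-decisions construction, which is a valid way to make the paper's claim precise.
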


\begin{proof}
In the component search process, whenever a $j$-set $J$ becomes active, the set of $k$-sets we may query is certainly contained in $\cK_J$ (some may not be permissible since they have already been queried), and for each $k$-set $K$ discovered in this way, the $j$-sets that become active are all in $\binom{K}{j}\setminus \{J\}$ (some may not become active if they were already discovered). Thus, in $\cT$ we may have made some additional queries which are not made in the component search process, and we may have added some vertices of type $j$ whose labels correspond to $j$-sets not added in the search process. Hence, one component search process terminated when the component is fully discovered can certainly be coupled with one instance of $\cT$.

Since we need at most $\binom{n}{j}$ component search process to discover all $j$-components, we can couple with $\binom{n}{j}$ branching processes starting from vertices of type $j$ with all possible labels $J \in \binom{[n]}{j}$. More precisely, whenever we start exploring a new $j$-component from a $j$-set $J$, we upper couple this portion of the component search process by the branching process starting from a vertex of type $j$ with label $J$; using in total $\binom{n}{j}$ branching processes (although some of them may be left unused) we have an upper coupling for the search process on all the $j$-components.
\end{proof}

\section{Upper bound on $L_1$: Proof of Lemma~\ref{lem:upper-bound}} \label{sec:upper}

To prove Lemma~\ref{lem:upper-bound} we first bound the number of possible rooted two-type trees that can be constructed by the two-type branching process $\cT$. Let $\cB$\label{class:B} be the set of all possible instances of $\cT$ and thus also of all rooted labelled two-type trees. For each $s\in \mathbb N$, let $\cB_s$ be the set of elements in $\cB$ of size $s$, which is equal to the set of all rooted labelled two-type trees with $s$ vertices of type $k$. Let $B_s$ be the cardinality of $\cB_s$. In the following lemma we determine the order of $B_s$.

\begin{lem}\label{lem:branchingconfigs}
For $s\in \mathbb N$, we have
\[\binom{n}{j}\binom{n-j}{k-j}^s \frac{c_0^{s-1}s^{s-1}}{s!} \le B_s \le \binom{n}{j}\binom{n-j}{k-j}^s \frac{c_0^{s-1}s^{s-1}}{s!}e^{1/c_0}.\]
In particular, we have
\[B_s = \Theta \left(\binom{n}{j}\binom{n-j}{k-j}^s \frac{(c_0 e)^{s}}{s^{3/2}} \right).\]
\end{lem}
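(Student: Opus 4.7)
The strategy is to encode each $T\in\cB_s$ by a labelled ``skeleton'' that reduces the count to a Cayley-type enumeration. To each $T$, associate the rooted tree on the vertex set $\{0,1,\ldots,s\}$ obtained by collapsing the root to~$0$ and representing the $s$ type-$k$ vertices by $1,\ldots,s$ (after fixing a bijection). For each non-root type-$k$ vertex $u$, its type-$j$ parent in $T$ is either the root (connect $u$ to $0$ in the skeleton) or a type-$j$ child of another type-$k$ vertex $u'$ (connect $u$ to $u'$ and record in $[c_0]$, via a canonical lexicographic ordering, which of $u'$'s $c_0$ type-$j$ children is the parent of $u$). Let $V_s$ be the number of such marked decorated skeletons. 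A standard calculation --- grouping rooted trees on $\{0,\ldots,s\}$ by the degree $d$ of $0$, expressing the resulting sum as a coefficient of the EGF $e^{r(x)/c_0}$ where $r(x)=xe^{r(x)}$ is Cayley's tree function, and applying Lagrange inversion --- yields the closed form
\[ V_s = s!\,c_0^s\,[x^s]\,e^{r(x)/c_0} = (1+c_0 s)^{s-1}. \]

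To relate this to $B_s$, note that a marked decorated skeleton together with a root label and a sequence of $s$ many $k$-labels determines the labels of all non-root type-$j$ vertices, so this data specifies a marked element of $\cB_s$ precisely when no type-$j$ vertex has two sibling type-$k$ children sharing a label; call such a tuple \emph{valid}, and write $I$ for the number of \emph{invalid} ones. A breadth-first argument using the distinctness of sibling labels in any valid tree shows that every $T\in\cB_s$ has trivial label-preserving automorphism group, so its $s!$ markings yield $s!$ distinct valid tuples. Hence
\[ s!\,B_s = V_s\,\binom{n}{j}\binom{n-j}{k-j}^s - I. \]

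The upper bound of the lemma follows immediately from $I\ge 0$ and the elementary estimate $(1+c_0 s)^{s-1}\le (c_0 s)^{s-1}(1+1/(c_0 s))^{s-1}\le c_0^{s-1}s^{s-1}e^{1/c_0}$. For the lower bound, a union bound over ordered pairs of type-$k$ vertices gives $I\le (s^2/2)\,V_s\,\binom{n}{j}\binom{n-j}{k-j}^{s-1}$, which is negligible compared with the main term in the regime of interest because $\binom{n-j}{k-j}$ grows polynomially in $n$ while the relevant $s$ is polylogarithmic; combined with $V_s\ge c_0^{s-1}s^{s-1}$ this yields the lower bound. The ``In particular'' statement then follows by Stirling's formula $s!\sim\sqrt{2\pi s}\,(s/e)^s$.

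The main obstacle will be making the skeleton-to-tree correspondence fully rigorous --- fixing canonical orderings so that the decorations are unambiguous --- and then carefully justifying that the factor $s!$ comes out exactly, via the triviality of label-preserving automorphisms of valid trees. The Lagrange inversion calculation and the Stirling-based asymptotics are routine.
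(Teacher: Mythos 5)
Your skeleton encoding and the closed form are correct: the number of decorated skeletons is indeed $V_s=(1+c_0s)^{s-1}$, which equals $s!\,F_s$ for the quantity $F_s=[z^s]T_J(z)$ that the paper extracts from the functional equation via the Lambert $W$-function, so your Cayley-function route is a clean alternative to that computation, and your upper bound (from $I\ge 0$ and $(1+c_0s)^{s-1}\le c_0^{s-1}s^{s-1}e^{1/c_0}$) is fine. The genuine gap is in the lower bound. The lemma is asserted for every $s\in\mathbb{N}$, with no relation between $s$ and $n$, and the paper applies the lower estimate at $s$ of order $\delta^{-1}\log\lambda\approx 2\eps^{-2}\log\lambda$; since $\eps$ may tend to $0$ polynomially (only $\eps^4n^j\to\infty$ and $\eps^2n^{k-j}(\log n)^{-1}\to\infty$ are assumed), this $s$ is \emph{not} polylogarithmic. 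For instance, with $k=3$, $j=2$ and $\eps=n^{-1/2}\log n$ one gets $s\approx n/\log n$ while $\binom{n-j}{k-j}=n-2$. Your union bound $I\le\tfrac{s^2}{2}V_s\binom{n}{j}\binom{n-j}{k-j}^{s-1}$ is negligible relative to the available slack $(1+c_0s)^{s-1}-(c_0s)^{s-1}=\Theta\bigl((c_0s)^{s-1}\bigr)$ only when $s^2=o\bigl(\binom{n-j}{k-j}\bigr)$, which fails both for general $s$ and inside the regime where the lemma is actually used (in the example above $s^2/\binom{n-j}{k-j}\approx n/\log^2 n\to\infty$). So the lower bound is not established by your argument.

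The root of the problem is that your identity $s!\,B_s=V_s\binom{n}{j}\binom{n-j}{k-j}^s-I$ interprets $\cB_s$ as the set of instances in which sibling type-$k$ vertices carry distinct labels, whereas the paper's proof simply sets $B_s=\binom{n}{j}\binom{n-j}{k-j}^sF_s$, i.e.\ each type-$k$ vertex independently contributes a factor $\binom{n-j}{k-j}$ with no distinctness requirement across siblings; that quantity is exactly your tuple count divided by $s!$, with no correction term, and it is all that is needed later (an upper bound for the coupling in Lemma~\ref{lem:upper-bound}, with the distinct-label subclass $\cB^-$ handled separately in Lemma~\ref{lem:bound-wheels}). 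Under your stricter reading the all-$s$ lower bound is in fact false once $s$ is large compared with $\binom{n-j}{k-j}$: already for $k=2$, $j=1$ the exact count of such trees is $\tfrac{n}{s+1}\binom{(n-1)(s+1)}{s}$, which falls below $n(n-1)^s s^{s-1}/s!$ when $s\gg n$, so no improved bound on $I$ can rescue the argument in the stated generality. The repair is to drop $I$ altogether: show that your (skeleton, root label, extension sequence) tuples correspond $s!$-to-one to the objects the paper counts, so that $s!\,B_s=V_s\binom{n}{j}\binom{n-j}{k-j}^s$ holds exactly; then both bounds and the $\Theta$ statement follow from your estimates on $(1+c_0s)^{s-1}$ and Stirling, matching the paper.
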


\begin{proof}
We first consider the class of rooted (unlabelled) two-type trees with a distinguished vertex $J$ of type $j$ as the root and at least one vertex of type $k$. Its generating function $T_J=T_J(z)$\label{genfun:TJ}, where $z$ indicates the number of vertices of type $k$, satisfies the following equation:
\begin{equation} \label{eq:T_J}
T_J(z) = \exp\left(z (1+T_J(z))^{c_0}\right) - 1.
\end{equation}

Let $W(z)$ denote the Lambert $W$-function defined by the equation $z = W(z)\exp(W(z))$. We have
\begin{equation} \label{eq:T-in-Lambert}
  T_J(z) = \exp\left( -\frac{W(-c_0 z)}{c_0} \right) - 1.
\end{equation}
By the Lagrange Inversion Theorem (see \cite[Appendix~A.6]{FlajoletSedgewickBook}), we have
\begin{equation} \label{eq:Lambert-power}
W^r(z) = \sum_{i \geq r} \frac{-r(-i)^{i-r-1}}{(i-r)!} z^i.
\end{equation}

Since $W(z)$ is analytic in a neighbourhood of $z=0$ and $W(0)=0$, for any function $F(z)$ analytic in a neighbourhood of $z=0$, the composition $F(W(z))$ is still analytic near $z=0$. Thus, using Taylor expansion we have
\begin{align*}
  T_J(z) &= \exp\left( -\frac{W(-c_0 z)}{c_0} \right) - 1 = \sum_{r \geq 1} \frac1{r!} \left( -\frac{W(-c_0 z)}{c_0} \right)^r \\
    &= \sum_{r \geq 1} \frac1{r!} \sum_{i \geq r} \frac{rc_0^{i-r}i^{i-r-1}}{(i-r)!} z^i = \sum_{i \geq 1} z^i \left( \sum_{r=1}^i \frac{c_0^{i-r} i^{i-r-1}}{(r-1)!(i-r)!} \right).
\end{align*}
The number $F_s$ of rooted unlabelled two-type trees with $s$ vertices of type $k$ is $[z^s]T_{J}(z)$,
\textit{i.e.}\ the coefficient of $z^s$ in $T_{J}(z)$. We obtain
\[
  F_s =[z^s]T_J(z) = \sum_{r=1}^s \frac{c_0^{s-r}s^{s-r-1}}{(r-1)!(s-r)!} = \frac{c_0^{s-1} s^{s-2}}{(s-1)!} \sum_{r=0}^{s-1} \frac{c_0^{-r} s^{-r} (s-1)_{(r)}}{r!}.
\]

As an upper bound, we have
\[
  F_s \leq \frac{c_0^{s-1} s^{s-1}}{s!}  \sum_{r=0}^{s-1} \frac{c_0^{-r}}{r!} 
      \leq \frac{c_0^{s-1} s^{s-1}}{s!}  e^{1/c_0}.   
\]

As a lower bound, we have
\[
  F_s \geq \frac{c_0^{s-1} s^{s-1}}{s!}.
\]

In each instance of $\cT$, we have $\binom{n}{j}$ choices for the label of the initial vertex of type $j$ and for each vertex of type $k$ discovered from a vertex of type $j$, we must choose $k-j$ new elements among $n-j$ (excluding those already in the parent vertex of type $j$). Then the labels of the next $c_0$ vertices of type $j$ are already determined. Therefore, we have the following relation between $B_s$ and $F_s$:
\[
  B_s = \binom{n}{j}\binom{n-j}{k-j}^s  \ F_s.
\]
We thus deduce the claimed bounds of $B_s$ using bounds on $F_s$. Moreover, by Stirling's approximation, we obtain the asymptotic behaviour of $B_s$:
\[
  B_s = \Theta \left(\binom{n}{j}\binom{n-j}{k-j}^s \frac{(c_0 e)^{s}}{s^{3/2}} \right). \qedhere
\]
\end{proof}

\begin{proof}[Proof of Lemma~\ref{lem:upper-bound}]
We consider $\binom{n}{j}$ independent instances of the branching process $\cT$.
Let $R_s$\label{var:Rs} be the random variable which counts the number of vertices of type $j$ present in total in the instances that have size $s$. In each such instance, the $s$ vertices of type $k$ are present with probability $p^s$. For the number of absent vertices of type $k$ (\textit{i.e.}\ which are not selected during the process) in an instance of size $s$, we observe that the $j$-set that labels the starting vertex of type $j$ is contained in $\binom{n-j}{k-j}$ many $k$-sets, and subsequently for each of the $s$ vertices of type $k$ we discover $c_0$ further vertices of type $j$, whose labels are each contained in $\left( \binom{n-j}{k-j} -1 \right)$ further $k$-sets. However, we have to consider the $s$ vertices of type $k$ that are indeed discovered. The total number of absent vertices of type $k$ is therefore
\[
  \binom{n-j}{k-j} + c_0 s \left(\binom{n-j}{k-j} -1 \right) - s = (1 + c_0 s) \binom{n-j}{k-j} - s (1+c_0).
\] 
Therefore, we have
\begin{align*}
\EX (R_s) & \le B_s p^s (1-p)^{(1+c_0 s)\binom{n-j}{k-j}-s(1+c_0)}\\
& = \Theta(1) \binom{n}{j} \left( \binom{n-j}{k-j} c_0 e p (1-p)^{c_0\binom{n-j}{k-j}}  \right)^s (1-p)^{\binom{n-j}{k-j} -s(1+c_0)} s^{-3/2}.
\end{align*}
The last equality is due to Lemma \ref{lem:branchingconfigs}. Recall that $p=(1-\eps) c_0^{-1}\binom{n-j}{k-j}^{-1}$. Using $(1-p) \le e^{-p}$ we have
\begin{align*}
  \EX (R_s)  &\le \Theta(1) \binom{n}{j} \left( (1-\eps) \cdot e \cdot e^{-(1-\eps)} \right)^s s^{-3/2} \exp\left((1-\eps)(1+c_0^{-1})s\binom{n-j}{k-j}^{-1} -\frac{1-\eps}{c_0}\right) \\
  &\leq \Theta(1) \binom{n}{j} \exp \left( s ( \log (1-\eps)  + \eps ) \right) s^{-3/2} \exp\left(2s\binom{n-j}{k-j}^{-1}\right).
\end{align*}
Since $\delta = -\eps - \log(1-\eps)$, we have
\begin{align*}
\EX (R_s) & \le  \Theta(1)  \binom{n}{j} \exp \left( -s \delta  \right) s^{-3/2} \exp\left(2s\binom{n-j}{k-j}^{-1}\right).
\end{align*}

Now, let $D_s$\label{var:Ds} be the random variable which counts the number of components of $\cH$ of size $s$. By Lemma \ref{lem:branchcoupling}, $\EX(D_s)$ is bounded above by the expected number of instances of the two-type branching process $\cT$ of size $s$. In each such instance every vertex of type $k$ is connected to exactly $c_0 + 1$ vertices of type $j$, therefore their expected number is equal to $\EX(R_s)(c_0+1)^{-1} s^{-1}$, where $R_s$ is the number of vertices of type $j$ present in total in all the instances. Denoting by $D_{\geq s}= \sum_{t\geq s} D_t$ the random variable counting the number of components of size at least $s$, we have
\[ \EX(D_{\geq s}) \leq \sum_{t\geq s} \EX(R_t)(c_0+1)^{-1} t^{-1}.\]
Let $\hat{s}= \delta^{-1}(\log \lambda - \frac{5}{2} \log \log \lambda+ K(n))$ and $\delta^- = \delta - 2 \binom{n-j}{k-j}^{-1}$. Recalling that $\delta = \eps^2/2 + O(\eps^3)$ and $\eps^2 n^{k-j} (\log n)^{-1} \to \infty$, it holds that $\delta = \omega(n^{j-k} \log n)$, which means $\delta^- = \delta - 2\binom{n-j}{k-j}^{-1} = (1-o(1)) \delta$. Thus, we have $\delta^- > 0 $ for $n$ large enough and we obtain
\begin{align*}
  \EX(D_{\geq \hat{s}}) &\leq \sum_{t\geq \hat{s}} \EX(R_t) (c_0+1)^{-1} t^{-1} \le \Theta(1) \binom{n}{j}(\hat{s})^{-5/2} \sum_{t\geq \hat{s}} (e^{-\delta^-})^t\\
& \leq \Theta(1) \binom{n}{j}(\hat{s})^{-5/2} \frac{e^{-\delta^- \hat{s}}}{1 - e^{-\delta^-}} \leq \Theta(1) \binom{n}{j}(\hat{s})^{-5/2} \frac{e^{-\delta^- \hat{s}}}{\delta^-}.
\end{align*}

Without loss of generality, we may assume that $K(n)=o(\log \lambda)$. Therefore, since $\frac{\eps^2 n^{k-j}}{\log n} \to \infty$, we have
\begin{align*}
\hat{s} \binom{n-j}{k-j}^{-1} &= (1+o(1) ) \delta^{-1} (\log \lambda) \binom{n-j}{k-j}^{-1} = O(1) \frac{\log (\eps^3 n^j)}{\eps^2 n^{k-j}} \\
&\leq O(1) (\log n) \eps^{-2} n^{j-k} = o(1).
\end{align*}
This leads to
\[
e^{-\delta^- \hat{s}} = e^{-\delta \hat{s}} \exp\left(2\hat{s}\binom{n-j}{k-j}^{-1}\right) =  (1+o(1)) e^{-\delta \hat{s}}.
\]

Since $\eps^4 n^j \to \infty$, we have $\lambda = \eps^3 \binom{n}{j} \rightarrow \infty$. We thus obtain
\begin{align*}
  \EX(D_{\geq \hat{s}}) &\leq \Theta(1) \binom{n}{j}(\hat{s})^{-5/2} \frac{(1+o(1)) e^{-\delta \hat{s}}}{(1+o(1))\delta} = \Theta(1) \binom{n}{j}(\hat{s})^{-5/2} \frac{e^{-\delta \hat{s}}}{\delta} \\
  &\le \Theta(1) \exp \left( \log \binom{n}{j} 
   - \frac{5}{2} \log \left((1+o(1))\frac{\log \lambda}{\delta} \right) - \log \lambda + \frac{5}{2} \log \log \lambda -K(n) - \log \delta \right) \\
& = \Theta(1) \exp \left( \log \binom{n}{j} 
   + \frac{5}{2} \log \delta  - \log \left( \eps^3 \binom{n}{j} \right)  -K(n) - \log \delta \right) \\
& = \Theta(1) \exp \left( \frac{3}{2} \left(\log (\eps^2)  - \log 2 +  \log \left(1+ O(\eps)\right) \right) - \log ( \eps^3 ) -K(n)  \right) \\
&  = O \left( \exp \left(- K(n) \right) \right).
\end{align*}
Since $K(n) \to \infty$, by Markov's inequality, whp we have $D_{\geq \hat{s}}=0$, meaning that there is no $j$-component of size larger than $\hat{s} = \delta^{-1}(\log \lambda - \frac{5}{2} \log \log \lambda+ K(n))$.
\end{proof}

\section{Lower bound on $L_1$: Proof of Lemma~\ref{lem:lower-bound-second-moment}} \label{sec:lower}
In this section we will prove that $\cH$ contains a hypertree component larger than a certain size,
which provides a lower bound on $L_1$ (Lemma~\ref{lem:lower-bound-second-moment}).
To this end, we study the only obstacle for a $j$-component to be a hypertree,
which are the \emph{wheels}, a hypergraph analogue of cycles.
We first give an upper bound on the number of possible wheels of length $\ell\in \mathbb{N}$ (Lemma~\ref{lem:wheelconfigs}).
This then implies that almost all possible (not too large) instances of the two-type branching process
correspond to hypertrees (Lemma~\ref{lem:bound-wheels}).
Lemma~\ref{lem:wheelconfigs} also implies that whp
there are no large non-hypertree components in $\cH$ (Lemma~\ref{lem:non-hypertree-bound}).
The proofs of these auxiliary lemmas will be delayed until Section~\ref{sec:wheels}.

Before giving these proofs, in this section we use the auxiliary lemmas to determine the asymptotic first and second moments
of the number of large components in $\cH$. Since the second moment is approximately the square of the first,
Chebyshev's inequality implies that whp there are many such large components, which by
Lemma~\ref{lem:non-hypertree-bound} are hypertrees, as required.

Recall that a hypertree component (\textit{i.e.}\ $j$-component that contains as many $j$-sets as possible
given its size) corresponds to a labelled two-type tree with no repeated labels.
An important structure that may appear in $\cH$ is the so-called \tdef{wheel}.
A \tdef{wheel}\label{term:wheel} of length $\ell \geq 2$ is a pair of sequences,
one of $\ell$ distinct hyperedges $K_1, K_2, \ldots, K_\ell, K_{\ell+1}=K_1$
and the other of $\ell$ distinct $j$-sets $J_0, J_1, \ldots, J_{\ell-1}, J_\ell=J_0$
such that $J_i \subset K_i \cap K_{i+1}$ for all $1 \leq i \leq \ell$ (see Figure~\ref{fig:wheel}).
Two wheels are considered identical if they only differ by a cyclic rotation or order reversion of the elements of the sequences. Given a wheel, it lies in a single $j$-connected component, and the presence of a wheel is the only obstacle for a component to be a hypertree. The reason is that a component ceases to be a hypertree if and only if we encounter the same $j$-set or $k$-set at least twice in the component search process, which makes a wheel. We have the following enumeration result on wheels.

\begin{figure}
\centering
\includegraphics[page=3,scale=1]{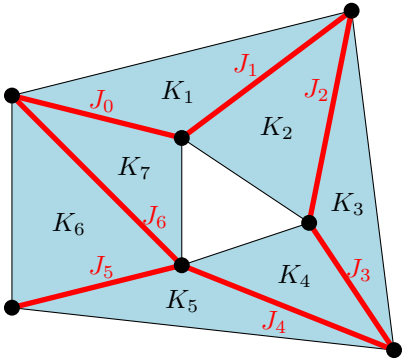}
\caption{An example of a wheel in the case $k=3$ and $j=2$.}
\label{fig:wheel}
\end{figure}

\begin{lem} \label{lem:wheelconfigs}
  Let $w_\ell = w_\ell(n)$\label{par:wl} be the number of possible wheels of length $\ell \geq 2$, with vertices chosen from $[n]$. We have
  \[
  w_\ell \leq \frac{c_w n^{k-j}}{\pg^{\ell-1}\ell}, \;\; \mathrm{where} \;\; c_w = \frac{(k-j)^j}{j!(k-j)!} \prod_{m=1}^{j-1} \left( 1 - c_0^{-1} \left(\binom{k-m}{j-m}-1\right) \right)^{-1}.
  \]
\end{lem}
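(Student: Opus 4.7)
Plan. The strategy is to bound $w_\ell$ by counting \emph{ordered wheels} (wheels with a distinguished starting $k$-set and a chosen direction of traversal) and then dividing by $\ell$: each unordered wheel has at least $\ell$ distinct cyclic rotations in a fixed direction, and hence at least $\ell$ ordered representations, so $w_\ell \leq W_{\mathrm{ord}}/\ell$ where $W_{\mathrm{ord}}$ is the number of ordered wheels. This handles the $1/\ell$ factor in the claimed bound without needing delicate reasoning about automorphisms.

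To bound $W_{\mathrm{ord}}$, I would enumerate ordered wheels sequentially: first pick $J_0$ ($\binom{n}{j}$ choices), then for $i=1,\dots,\ell-1$ extend by choosing $K_i \supset J_{i-1}$ (at most $\binom{n-j}{k-j}$ choices) and $J_i \subset K_i$ with $J_i \neq J_{i-1}$ ($c_0$ choices). For the upper bound, drop the global distinctness constraints on the $K_i$ and on non-consecutive $J_i$, which only inflates the count. This yields an ``open walk'' count of $\binom{n}{j}\bigl(\binom{n-j}{k-j}\,c_0\bigr)^{\ell-1}$. Finally, the closure step requires $K_\ell \supset J_{\ell-1}\cup J_0$, of which there are $\binom{n-2j+m}{k-2j+m}$, where $m := |J_0 \cap J_{\ell-1}|$.

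The central difficulty is handling the closure, since the count depends on the intersection size $m$, which varies across walks. I would track the walk through the intersection sizes $m_i := |J_0 \cap J_i|$: a step from $J_{i-1}$ to $J_i$ preserves a specified $m$-subset of $J_0 \cap J_{i-1}$ inside $J_i$ with multiplicity $\binom{n-j}{k-j}\bigl(\binom{k-m}{j-m}-1\bigr)$ out of the total $\binom{n-j}{k-j}\,c_0$ extensions, i.e., with proportion $c_0^{-1}(\binom{k-m}{j-m}-1)$. Decomposing the walk according to which intersection levels are visited and for how many consecutive steps, the sum over trajectories factorises into a geometric series in each $m = 1,\dots,j-1$, yielding the product $\prod_{m=1}^{j-1}\bigl(1 - c_0^{-1}(\binom{k-m}{j-m}-1)\bigr)^{-1}$ that appears in $c_w$. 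The leading factor $(k-j)^j / (j!(k-j)!)$ should then emerge from combining $\binom{n}{j}$ with the dominant closure count at $m=0$, namely $\binom{n-2j}{k-2j} \sim n^{k-2j}/(k-2j)!$, after the falling-factorial cancellation $\binom{n-2j+m}{k-2j+m}/\binom{n-j}{k-j} \sim ((k-j)/n)^{j-m}$ aligns the constants.

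The main obstacle is the careful combinatorial bookkeeping that establishes the geometric-series structure in the trajectory sum. This likely requires either a direct inclusion-exclusion over which elements of $J_0$ are retained by each $J_i$, or equivalently a transfer-matrix / generating-function identity exploiting the Johnson-scheme symmetry (with Vandermonde-type sums pinning down the constants). The remaining ingredients - reducing to the ordered count, performing the sequential enumeration, and extracting the asymptotic coefficient - are routine in comparison.
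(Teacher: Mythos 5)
Your plan follows in outline the same route as the paper's own proof: build the wheel sequentially from $J_0$, divide by the length to kill rotations (the paper divides by $2\ell$), and absorb the closure constraint by tracking how much of $J_0$ the walk retains, with the per-step factor $c_0^{-1}\left(\binom{k-m}{j-m}-1\right)$ and a geometric series over the levels $m=1,\dots,j-1$. The step you defer as ``the main obstacle'' is exactly the paper's ``freezing'' bookkeeping, and it is the entire content of the lemma, so the proposal is incomplete at the decisive point. Moreover, the sketch you give of that step is not correct as stated: $m_i=|J_0\cap J_i|$ is not monotone, since vertices of $J_0$ can be dropped and later re-introduced among the $k-j$ new vertices of a subsequent hyperedge, so a decomposition into ``levels visited for consecutive steps'' does not cover the trajectory space. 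Each re-introduction costs only $\Theta(1/n)$ at the step where it occurs but buys a factor $\Theta(n)$ in the closure count $\binom{n-2j+m}{k-2j+m}$, so these trajectories contribute at leading order and must be charged explicitly; this is precisely what the paper's device of a $j$-set \emph{freezing} a vertex (being retained from some step until the end) is designed to do.

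More seriously, the assertion that the dominant contribution comes from the closure at $m=0$ fails for $j\ge 2$, and no completion of the sketched bookkeeping can reach the inequality with the stated $c_w$ uniformly in $\ell\ge 2$. Walks that retain a fixed $m$-subset of $J_0$ in every $J_i$ (with $1\le m\le j-1$) pay only $\rho_m^{\ell-1}$ with $\rho_m=c_0^{-1}\left(\binom{k-m}{j-m}-1\right)<1$, but they end at level $m$, where the closure count is larger by $\Theta(n^m)$; their total contribution is $\Theta\left(n^{k-j+m}\,\rho_m^{\ell-1}\,p_0^{-(\ell-1)}\right)$, which exceeds the claimed bound $c_w n^{k-j}p_0^{-(\ell-1)}/\ell$ as soon as $\ell$ is small compared with $\log n$. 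These are not artefacts of dropping distinctness: they correspond to genuine wheels with a nonempty centre. For instance, with $k=3$, $j=2$, $\ell=3$, the wheels $K_1=\{c,a,b\}$, $K_2=\{c,b,d\}$, $K_3=\{c,d,a\}$ with $J_0=\{c,a\}$, $J_1=\{c,b\}$, $J_2=\{c,d\}$ number $\Theta(n^4)$, whereas the stated bound is $O(n^3)$. (The paper's own computation glosses over the same point: in its notation the exponent of $\frac{k-j}{n-2j}$ should be $j-a_1$ rather than $j$, i.e.\ it tacitly assumes the centre is empty.) So the gap in your write-up is not merely an omitted calculation; carrying the calculation out shows that the centred trajectories dominate for short wheels, and the bound in the stated form can only hold once such wheels are excluded or $\ell$ is large compared with $\log n$.
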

We note that $c_w$ does not depend on $n$ or $\ell$. The detailed proof is postponed until Section~\ref{sec:wheels:wheelconfigs}.

Recall that we denote by $\cB$ the set of possible instances of $\cT$ and by $\cB_s$ the elements in $\cB$ with $s$ vertices of type $k$ (bounds on $B_s=|\cB_s|$ were given in Lemma~\ref{lem:branchingconfigs}). We now consider the subset $\cB^-$\label{class:Bmin} of $\cB$ formed by rooted labelled two-type trees in which all labels are distinct, \textit{i.e.}\ that correspond to hypertrees, and we denote by $\cB^-_s$ the set of elements in $\cB^-$ with $s$ vertices of type $k$.

\begin{lem} \label{lem:bound-wheels}
  For $s \geq 2304$, we have
  \[
  B_s^- := |\cB_s^-| = (1 - O(s n^{j-k}) - O(s^2 n^{-j}))B_s.
  \]
  In particular, if $s\to \infty$ and also $s n^{j-k} \to 0$ and $s^2 n^{-j} \to 0$, we have $B_s^- = (1-o(1))B_s$.
\end{lem}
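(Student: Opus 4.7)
The plan is to bound $B_s - B_s^-$, the number of rooted labelled two-type trees in $\cB_s$ with at least one repeated label, by charging each such ``bad'' tree to a wheel in the embedded hypergraph and then applying Lemma~\ref{lem:wheelconfigs}. The key structural observation is that any label repetition forces a wheel: if two distinct vertices $x_1, x_2$ of a labelled two-type tree $T$ share a label, then identifying them folds the unique tree path $x_1 \leadsto x_2$ into a closed walk on the $j$-sets and $k$-sets of $[n]$ whose labels, after removing redundant back-and-forth, form a wheel of some length $\ell \geq 2$ in the sense of Section~\ref{sec:lower}. Hence $|\cB_s \setminus \cB_s^-|$ is at most the sum, over all wheels $W$ of each length $\ell$, of the number of labelled two-type trees in $\cB_s$ that realise $W$ in this way.

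For a fixed wheel $W$ of length $\ell$, I would describe such a realising tree $T$ as follows: cut $W$ open along one edge to obtain a ``backbone'' path of $\ell$ $k$-vertices and $\ell+1$ $j$-vertices (two of which share the label $J_0$, corresponding to the closing edge of the wheel); mark a root on the backbone; and attach branching-process tree-extensions at the $j$-vertices of the backbone so as to place the remaining $s - \ell$ $k$-vertices. The number of labelled extensions is bounded using Lemma~\ref{lem:branchingconfigs}: each extension contributes a factor of the form $\binom{n-j}{k-j}^{s-\ell}$ together with a combinatorial factor $G(s,\ell)$ encoding the unlabelled shapes of the extensions and the choice of how to distribute $k$-vertices among the attachment points. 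A careful bookkeeping (e.g.\ charging each bad tree to its shortest wheel to avoid over-counting) should give a bound of the form
\[
B_s - B_s^- \leq \sum_{\ell \geq 2} w_\ell \cdot G(s,\ell) \cdot \binom{n-j}{k-j}^{s-\ell}.
\]

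Substituting the bound $w_\ell \leq c_w n^{k-j}/(\pg^{\ell-1}\ell)$ from Lemma~\ref{lem:wheelconfigs} and using $\pg^{-1} = c_0 \binom{n-j}{k-j}$, most of the $\binom{n-j}{k-j}$ factors cancel against the factor $\binom{n-j}{k-j}^s$ in $B_s = \binom{n}{j}\binom{n-j}{k-j}^s F_s$ from Lemma~\ref{lem:branchingconfigs}, leaving a saving of order $n^{k-j}\cdot c_0^{\ell-1}/\binom{n-j}{k-j}^\ell$ relative to $B_s$. The dominant contribution comes from $\ell = 2$ (the shortest wheels, corresponding to tree paths of length $4$ with coinciding endpoint labels): the $O(s^2 n^{-j})$ term arises from $j$-vertex collisions, where $s^2$ counts ordered pairs among the $1 + c_0 s$ $j$-vertices and $n^{-j}$ is the per-pair collision probability; the $O(s\, n^{j-k})$ term arises from $k$-vertex collisions and reflects the $s$ choices of $k$-vertex together with the $n^{j-k}$-order probability of coinciding with another $k$-label sharing a $(j+1)$-subset.

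The main obstacles will be: (i) pinning down the tree-extension count $G(s,\ell)$ precisely enough, including the choice of attachment points and the application of the asymptotic tree count $F_{s-\ell+O(1)} = \Theta((c_0 e)^{s-\ell}(s-\ell)^{-3/2})$ from Lemma~\ref{lem:branchingconfigs}, so that the comparison with $B_s$ is explicit; (ii) justifying a canonical ``first wheel'' selection to avoid over-counting trees that contain several wheels; and (iii) controlling the tail of the sum over $\ell$ by a geometric estimate, where the condition $s \geq 2304$ likely enters to ensure that some ratio of the form $(c_0 s / \binom{n-j}{k-j})^{\ell-2}$ in the summand stays bounded and the tail remains negligible compared to the $\ell = 2$ contribution.
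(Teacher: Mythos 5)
Your central structural claim --- that any repeated label in a tree of $\cB_s$ folds, after identification, into a wheel of length $\ell\geq 2$ --- is false, and the case it misses is exactly the source of one of the two error terms in the statement. In the two-type branching process a type-$k$ child $v$ of a type-$j$ vertex $w$ may receive the same label as $w$'s \emph{grandparent} $u$: the label of $w$ is a $j$-subset of the label of $u$, so $u$'s label is one of the admissible labels for $v$. Identifying $u$ and $v$ collapses the path $u-w-v$ to nothing: the two coinciding labels are one and the same hyperedge, and a wheel requires $\ell\geq 2$ \emph{distinct} hyperedges. (For $k=j+1$, e.g.\ $k=3$, $j=2$, wheels of length $2$ do not even exist, since two distinct hyperedges cannot both contain two distinct $j$-sets, yet grandparent collisions certainly occur.) These collisions are precisely what produces the $O(sn^{j-k})$ term: relative to $B_s$ one label-choice factor $\binom{n-j}{k-j}$ is replaced by a forced label, while one gains roughly $c_0 s$ choices for $w$, giving $\Theta\left(s\binom{n-j}{k-j}^{-1}\right)B_s$. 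In the relevant range this class can dominate the genuinely wheel-type collisions counted by $O(s^2 n^{-j})$ (e.g.\ for $k=3$, $j=2$, where $s=o(n)$), so a charging scheme that sends every bad tree to a wheel simply does not cover $\cB^\circ_s$. This is exactly why the paper's Proposition~\ref{prop:decomp-non-hypertree} splits off the separate class $Q_s^1$ (a tree with a marked type-$j$ vertex, with no unicycle involved) before treating the collisions at distance at least $4$, which do yield wheels, via $Q_s^2$ and $Q_s^3$.

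For those genuinely wheel-type cases your plan is in the spirit of the paper's treatment, but the two obstacles you flag are resolved there differently from what you anticipate. Over-counting is avoided not by a ``shortest/first wheel'' convention on the finished tree but by an injection keyed to the \emph{first repeated label met in a breadth-first search}; this is also what guarantees that the identified cycle really is a wheel (no earlier labels repeat along it). The enumeration is then carried out not by backbone-plus-attachment bookkeeping with a factor $G(s,\ell)$, but by generating-function dominance: the wheel bound of Lemma~\ref{lem:wheelconfigs} is fed into $w(z)$ and $w^\bullet(z)$, everything is expanded via the Lambert $W$ function, and coefficient sums of the form $\sum_i i^a s_{(i)}/s^i$ are controlled by the Laplace-type Lemma~\ref{lem:laplace-app}. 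In particular the threshold $s\geq 2304=(16\cdot 3)^2$ comes from that lemma with $a=3$, not from keeping a geometric tail over the wheel length $\ell$ in check.
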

Lemma~\ref{lem:bound-wheels} will be proved in Section~\ref{sec:wheels:bound-wheels}.

Let $C_s$\label{var:Cs} be the number of $j$-sets in hypertree components of size $s$ in $\cH$. It is clear that $C_s$ is a lower bound for the number of $j$-sets in components of size $s$. 

\begin{lem} \label{lem:lower-bound-j}
  For $s \geq 2304$ such that $s n^{j-k} \to 0$ and $s^2 n^{-j} \to 0$, we have
  \[
  \EX(C_{s}) \geq \Theta(1) \exp\left( \log \binom{n}{j} - s\delta - \frac{3}{2} \log s \right).
  \]
\end{lem}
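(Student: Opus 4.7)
The plan is to compute $\EX(C_s)$ directly by summing, over every potential labelled hypertree component $T$ of size $s$, the probability that $T$ is actually a $j$-component of $\cH$, weighted by the number of $j$-sets it contains. Every hypertree $j$-component of size $s$ corresponds uniquely to an unrooted labelled two-type tree $T$ with distinct labels having $s$ vertices of type $k$ and $1+c_0 s$ vertices of type $j$. For $T$ to appear as a $j$-component of $\cH$, all $s$ of its $k$-sets must lie in $\cH$ (factor $p^s$) and every $k$-set of $[n]$ outside $T$ that contains some $j$-set of $T$ must be absent (factor $(1-p)^{\mathrm{bd}(T)}$, where $\mathrm{bd}(T)$ denotes the number of such boundary $k$-sets). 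Since each of the $1+c_0 s$ many $j$-sets in $T$ sits in exactly $\binom{n-j}{k-j}$ $k$-sets of $[n]$, one gets the uniform upper bound $\mathrm{bd}(T) \le (1+c_0 s)\binom{n-j}{k-j}$.

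Using the bijection between rooted labelled two-type trees with distinct labels (i.e.\ elements of $\cB^-_s$) and pairs (unrooted hypertree component of size $s$, $j$-set therein), the sum of $(1+c_0 s)$ over all unrooted hypertree components of size $s$ is exactly $B_s^-$. Hence
\[
\EX(C_s) \;=\; \sum_T (1+c_0 s)\, p^s (1-p)^{\mathrm{bd}(T)} \;\geq\; B_s^- \, p^s \,(1-p)^{(1+c_0 s)\binom{n-j}{k-j}}.
\]

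Next I would feed in the input lemmas. Because $s\ge 2304$, $sn^{j-k}\to 0$ and $s^2 n^{-j}\to 0$ by hypothesis, Lemma~\ref{lem:bound-wheels} gives $B_s^- \ge (1-o(1))B_s$, which combined with the lower bound of Lemma~\ref{lem:branchingconfigs} yields $B_s^- \ge \Theta(1)\binom{n}{j}\binom{n-j}{k-j}^s (c_0 e)^s s^{-3/2}$. Substituting $p=(1-\eps)c_0^{-1}\binom{n-j}{k-j}^{-1}$ cancels the $\binom{n-j}{k-j}^s c_0^s$ factors, so
\[
B_s^- \, p^s \;\ge\; \Theta(1)\binom{n}{j}\bigl((1-\eps)e\bigr)^s s^{-3/2}.
\]

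For the boundary factor I would expand $\log(1-p) = -p + O(p^2)$ and obtain
\[
(1-p)^{(1+c_0 s)\binom{n-j}{k-j}} \;=\; \exp\!\Bigl(-(1-\eps)(1+c_0 s)c_0^{-1}\bigl(1 + O(p)\bigr)\Bigr) \;=\; \Theta(1)\, e^{-(1-\eps)s},
\]
where the error in the exponent is $O(sp) = O(s n^{j-k}) = o(1)$ by the hypothesis on $s$. Multiplying the two estimates and using $\eps + \log(1-\eps) = -\delta$ collapses everything to $\Theta(1)\binom{n}{j}\, s^{-3/2}\, e^{-s\delta}$, which is the required bound. The main technical point is the boundary estimate: the uniform upper bound on $\mathrm{bd}(T)$ sidesteps a case analysis on the shape of $T$, and the assumption $sn^{j-k}\to 0$ is precisely what absorbs the quadratic Taylor correction into a $\Theta(1)$ prefactor rather than a worse multiplicative loss.
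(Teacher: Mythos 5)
Your proof is correct and takes essentially the same route as the paper: lower-bound $\EX(C_s)$ by $B_s^-\, p^s (1-p)^{(1+c_0 s)\binom{n-j}{k-j}}$ (identifying $j$-sets in hypertree components with occurrences of rooted trees in $\cB_s^-$), then apply Lemmas~\ref{lem:bound-wheels} and~\ref{lem:branchingconfigs} and simplify via $\delta=-\eps-\log(1-\eps)$. Your explicit treatment of the boundary factor through $\log(1-p)=-p+O(p^2)$ with error $O(sn^{j-k})=o(1)$ is, if anything, slightly more careful than the paper's appeal to $1-p\le e^{-p}$, but it is the same argument.
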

\begin{proof}
  To give a bound on $\EX(C_{s})$, we bound the probability of each element of $\cB_s^-$ occurring in the hypergraph $\cH$. Given a hypertree of size $s$, the probability that it occurs as a component in $\cH$ is at least $p^s (1-p)^{(1+sc_0)\binom{n-j}{k-j}}$, since each $j$-set implies the absence of at most $\binom{n-j}{k-j}$ hyperedges, and since the number of $j$-sets in a hypertree component of size $s$ is exactly $sc_0+1$. By Lemma~\ref{lem:bound-wheels}, we have $B_s^-=(1-o(1))B_s$. Using Lemma~\ref{lem:branchingconfigs} and the fact that $1-p \leq e^{-p}$, we have
\begin{align*}
  \EX(C_{s}) &\geq B_s^- p^s (1-p)^{(1+sc_0)\binom{n-j}{k-j}} \\
  &\geq (1-o(1)) B_s p^s (1-p)^{sc_0 \binom{n-j}{k-j} + \binom{n-j}{k-j}}  \\
  &= \Theta(1) \binom{n}{j} \binom{n-j}{k-j}^s \frac{(c_0 e)^s}{s^{3/2}} \cdot (1-\eps)^s c_0^{-s} \binom{n-j}{k-j}^{-s} e^{-(1-\eps)s} e^{-(1-\eps)c_0^{-1}} \\
  &=\Theta(1) \binom{n}{j} \frac{e^s}{s^{3/2}} (1-\eps)^s e^{-(1-\eps)s} \\
  &=\Theta(1) \exp\left(\log\binom{n}{j} + s -  \frac{3}{2}\log s + s  \log(1-\eps)  - s (1-\eps)  \right)\\
  &=\Theta(1) \exp\left(\log\binom{n}{j} - s \left( - \eps - \log(1-\eps) \right) -  \frac{3}{2}\log s \right).
\end{align*}
We conclude the proof by recalling that  $\delta = -\eps -\log(1-\eps)$.
\end{proof}

We also consider components in $\cH$ that are not hypertree components, and therefore each of them must contain a wheel. We recall the value $\delta = -\eps -\log(1-\eps) = \eps^2/2 + O(\eps^3)$. 
We have the following lemma, which will be proved in Section~\ref{sec:wheels:non-hypertree}.

\begin{lem} \label{lem:non-hypertree-bound}
 Let $s^\circ = s^\circ(n)$ satisfy $s^\circ \delta \to \infty$. Then whp there is no non-hypertree component in $\cH$ of size larger than $s^\circ$. 
\end{lem}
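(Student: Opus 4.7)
The plan is to apply Markov's inequality to $\EX(N^{nh}_{>s^\circ})$, the expected number of non-hypertree components of $\cH$ of size larger than $s^\circ$, and show that this expectation tends to $0$. The essential input is that every non-hypertree component contains at least one wheel; this lets me enumerate directly over (wheel, containing component) pairs using Lemma~\ref{lem:wheelconfigs}, combined with an estimate on the size of the component given that a specific wheel is present. I focus on the dominant unicyclic case (cyclomatic number~$1$); components of higher cyclomatic number contain multiple wheels and are strictly rarer, so they can be handled by an analogous but simpler argument.

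For each wheel $W$ of length $\ell\geq 2$, conditioned on $W\subseteq\cH$, the component $C_W$ can be explored by running the component search process from each of the $\ell$ $j$-sets of $W$. By Lemma~\ref{lem:branchcoupling}, this exploration is upper-coupled with $\ell$ independent copies $\cT_1,\ldots,\cT_\ell$ of the two-type branching process $\cT$, and hence
\[
\Pr(|C_W|>s^\circ \mid W\subseteq\cH) \leq \Pr\!\left(\sum_{i=1}^\ell |\cT_i| \geq s^\circ - \ell\right).
\]
Because a single copy of $\cT$ is subcritical with mean offspring $(1-\eps)$, its size has exponentially decaying tails; combining Lemma~\ref{lem:branchingconfigs} with the estimates already carried out in the proof of Lemma~\ref{lem:upper-bound} yields $\Pr(|\cT|\geq t) = O(t^{-5/2}e^{-t\delta}/\delta)$. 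A Chernoff-type argument for the sum of $\ell$ such independent copies then gives $\Pr(|C_W|>s^\circ \mid W\subseteq\cH) \leq \mathrm{poly}(\delta s^\circ) \cdot e^{-c\delta s^\circ}$ for some absolute constant $c>0$, uniformly for $\ell \leq s^\circ/2$.

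Combining this with the wheel estimate $w_\ell p^\ell = O((1-\eps)^\ell/\ell)$ from Lemma~\ref{lem:wheelconfigs} -- the crucial cancellation being $n^{k-j}\pg = O(1)$, which eliminates any $\binom{n}{j}$-type factor that was present in Lemma~\ref{lem:upper-bound} -- and splitting the sum at $\ell^{*}=s^\circ/2$, the small-$\ell$ part contributes $O(\log(1/\eps)) \cdot \mathrm{poly}(\delta s^\circ)\cdot e^{-c\delta s^\circ} = o(1)$ because $s^\circ\delta\to\infty$, while the large-$\ell$ tail is bounded by $\sum_{\ell>\ell^{*}}(1-\eps)^\ell/\ell = O(e^{-\eps s^\circ/2}/(\eps s^\circ)) = o(1)$ because $\eps s^\circ\to\infty$ (itself a consequence of $s^\circ\delta\to\infty$ combined with $\eps\in(0,1)$). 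Markov's inequality then concludes the proof. The main obstacle is the Chernoff-type tail bound for the sum of independent subcritical branching process sizes: establishing that the moment generating function of $|\cT|$ converges in a neighbourhood of the origin of width $\Omega(\delta)$ requires a careful analysis of $T_J(z)$ near its unique dominant singularity, building on the explicit form $T_J(z)=\exp(-W(-c_0z)/c_0)-1$ from~\eqref{eq:T-in-Lambert}.
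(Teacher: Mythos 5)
Your high-level strategy is genuinely the same as the paper's: every non-hypertree component contains a wheel, so one runs a union bound over potential wheels, using Lemma~\ref{lem:wheelconfigs} together with the cancellation $n^{k-j}\pg=O(1)$, and controls the rest of the component by coupling the conditional exploration with independent copies of $\cT$ (this is exactly the paper's wheel-based branching process; note, incidentally, that the exploration must be started from all of the up to $c_0\ell$ $j$-sets contained in the wheel's hyperedges, not just the $\ell$ designated $j$-sets, and that your restriction to the ``unicyclic case'' is unnecessary, since the union bound over wheels already covers components of any cyclomatic number). The difference is in how the size of the component attached to a wheel is controlled: the paper enumerates two-type unicycles of each total size $s$ exactly via generating functions (Lemma~\ref{lem:non-hypertree-config}, using the Lambert $W$-function and Lemma~\ref{lem:laplace-app}) and multiplies by the occurrence probability, obtaining $\EX(P_s)\le\Theta(1)e^{-\delta s}$ uniformly in $s$ and hence $\EX(N'_{\ge s^\circ})=O\bigl(e^{-\delta^- s^\circ}/(\delta^- s^\circ)\bigr)$, whereas you replace this by a Chernoff bound for a sum of independent copies of $\cT$ -- and it is precisely there that your argument has genuine gaps.

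Concretely: (i) the claimed uniform bound $\Pr(|C_W|>s^\circ\mid W\subseteq\cH)\le\mathrm{poly}(\delta s^\circ)\,e^{-c\delta s^\circ}$ for all $\ell\le s^\circ/2$ is false. Conditioned on a wheel of length $\ell$, the component contains about $c_0\ell$ attached copies of $\cT$, each of expected size $\Theta(1/(c_0\eps))$, so its conditional expected size is of order $\ell/\eps$; since $s^\circ=\omega(\delta^{-1})=\omega(\eps^{-2})$, already for $\ell$ of order $\eps s^\circ$ (far below $s^\circ/2$) this mean is of order $s^\circ$ and no such exponential tail bound can hold. The cut-off must be taken at $\ell^*=O(\eps s^\circ)$, where fortunately $(1-\eps)^{\ell^*}\approx e^{-\Theta(\delta s^\circ)}$ still kills the tail -- but you did not do this. (ii) Even below a correct cut-off, your bookkeeping (total wheel weight $O(\log(1/\eps))$ times a uniform tail bound) does not close: $\log(1/\eps)$ may be of order $\log n$ while $\delta s^\circ\to\infty$ arbitrarily slowly, so $\log(1/\eps)\,e^{-c\delta s^\circ}$ need not be $o(1)$; you must keep the $\ell$-dependence of the tail estimate (morally $\approx c_0\ell\,\Pr(|\cT|\ge s^\circ-\ell)$) inside the sum over $\ell$ so that the factor $(1-\eps)^\ell$ does the work. (iii) The Chernoff step itself, which you acknowledge as the main obstacle, is only asserted, and the obvious version fails: since $\Pr(|\cT|=t)=\Theta(t^{-3/2}e^{-\delta t})$ (not $t^{-5/2}$, by the way), one has $\EX e^{\theta|\cT|}=1+\Theta(\eps)$ for $\theta=\Theta(\delta)$, so each additional wheel hyperedge contributes $c_0$ fresh copies costing a factor $\bigl(\EX e^{\theta|\cT|}\bigr)^{c_0}$ against only $(1-\eps)$ of decay, and whether the geometric sum over $\ell$ converges depends on delicate constants; with only a crude bound $\EX e^{\theta|\cT|}=O(1)$ the correction is $e^{\Theta(\ell)}$ and the sum diverges in part of the admissible range of $\eps$ and $s^\circ$. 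So the deferred estimate is not routine, and as written the proof does not go through; the paper's exact enumeration of unicyclic configurations is what avoids all three issues at once.
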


In other words, whp any component containing a wheel has size at most $s^\circ$. We can now combine the results of this section to prove Lemma~\ref{lem:lower-bound-second-moment}, using a second moment argument.

\begin{proof}[Proof of Lemma~\ref{lem:lower-bound-second-moment}]
  We set $s^* = \delta^{-1} \log \lambda$ and $s_*= \delta^{-1}(\log \lambda - \frac{5}{2} \log \log \lambda - K(n))$ where $K(n)\to\infty$. We assume here $K(n) = o(\log \lambda)$ without loss of generality. Furthermore, we set $s_0 = s_* + \delta^{-1} K(n) / 2 = \delta^{-1}(\log \lambda - \frac{5}{2} \log \log \lambda - K(n)/2)$. Firstly, we know from Lemma~\ref{lem:upper-bound} that whp there is no component larger than $s^*$. Let $S_+$\label{var:S+} denote the number of $j$-sets in components of size between $s_*$ and $s^*$. Since the range of $\eps$ implies that $s_*$ and $s_0$ satisfy the conditions in Lemma~\ref{lem:lower-bound-j}, we have the following lower bound by counting only hypertree components:
  \begin{align} \label{eq:expectS+}
    \EX(S_+) &\geq \sum_{s_* \leq s \leq s_0} \EX(C_{s}) \geq (s_0 - s_*) \Theta(1) \exp\left( \log \binom{n}{j} - s_0 \delta - \frac{3}{2} \log s_0 \right) \nonumber \\ 
    & \ge \Theta(1)  \frac{K(n)}{2 \delta}  \exp\left( \log \binom{n}{j} - \log \lambda + \frac{5}{2} \log\log \lambda + \frac{K(n)}{2} - \frac{3}{2} \log  \left( \delta^{-1} \log \lambda \right) \right) \nonumber \\ 
    & \ge \Theta(1)  \frac{K(n)}{2 \delta}  \exp\left( - \log \eps^3 + \frac{5}{2} \log\log \lambda + \frac{K(n)}{2} + \frac{3}{2} \log \frac{\eps^2}{2} + \log(1+O(\eps)) - \frac{3}{2} \log  \log \lambda \right) \nonumber \\ 
    &\geq \Theta(1) \frac{K(n)}{2 \delta} \exp(K(n)/2)\log \lambda = \Theta(1) K(n) \exp(K(n)/2) s^*= \omega(s^*). 
  \end{align}
We now show that $\EX(S_+^2)$ is approximately $\EX(S_+)^2$. Let $q$ be the probability that a given $j$-set is in a component of size between $s_*$ and $s^*$, so $\EX(S_+) = q \binom{n}{j}$. For two $j$-sets $J_1, J_2$ (not necessarily distinct), let $\kappa_1, \kappa_2$ be the components in which they lie, respectively. Let $s_1, s_2$ be the sizes of $\kappa_1$ and $\kappa_2$ respectively. We have
  \begin{align*}
    \EX(S_+^2) &= \sum_{J_1, J_2} \Pr( s_* \leq s_1 \leq s^*,\; s_* \leq s_2 \leq s^*) \\
    &\leq \sum_{J_1} \Pr(s_* \leq s_1 \leq s^*) \sum_{J_2} \Pr(s_2 \geq s_* \mid s_* \leq s_1 \leq s^*).
  \end{align*}
  Given that $\kappa_1$ is of size between $s_*$ and $s^*$, we want to bound the probability that $\kappa_2$ is of size at least $s_*$. Given the $j$-set $J_2$, if $J_2 \in \kappa_1$, then $\kappa_2 = \kappa_1$ is of size at least $s_*$; otherwise, we start a modified search process in the hypergraph, where we ignore an existing hyperedge whenever it contains a $j$-set in $\kappa_1$. This modified search process can be upper coupled by the unmodified search process $\cT$, in which all $j$-sets and $k$-sets are still available. Therefore, we have
  \begin{align*}
    \EX(S_+^2) &\leq \sum_{J_1} \Pr(s_* \leq s_1 \leq s^*) \left( s_1 + \left( \binom{n}{j} - s_1 \right) q \right) \\
    &\leq \EX(S_+) \left( s^* + (1+o(1)) q \binom{n}{j} \right) = \EX(S_+)^2 \left( 1 + o(1) + \frac{s^*}{\EX(S_+)} \right) \stackrel{\eqref{eq:expectS+}}{=} \EX(S_+)^2 (1+o(1)),
  \end{align*}
By Lemma~\ref{lem:upper-bound} and Chebyshev's inequality, we have
\begin{align*}
\Pr({\cH} \ \textrm{contains no component of size at least } s_*) & \leq \Pr(S_+=0) +o(1) \\ & \leq \frac{\EX(S_+^2) - \EX(S_+)^2}{\EX(S_+)^2} + o(1) = o(1).
\end{align*}

  Similarly, for any fixed constant $m$ the probability that $\cH$ contains at most $m$ components of size at least $s_*$ is bounded by $\Pr(S_+ \leq ms^*) + o(1)$. Again by Chebyshev's inequality, we have
  \[
    \Pr(S_+ \leq ms^*) + o(1) \leq \frac{\EX(S_+^2) - \EX(S_+)^2}{(\EX(S_+) - ms^*)^2} + o(1) = o(1).
  \]
  The latter equality is due to the fact that $\EX(S_+) \stackrel{\eqref{eq:expectS+}}{=} \omega(s^*)$.

  By Lemma~\ref{lem:non-hypertree-bound}, since $s_* \delta = \Theta(\log \lambda) \to \infty$, we know that whp all components of size at least $s_*$ are hypertree components, including the $m$ largest components.
\end{proof}

\section{Wheels: Proofs of auxiliary results} \label{sec:wheels}

\subsection{Proof of Lemma~\ref{lem:wheelconfigs}} \label{sec:wheels:wheelconfigs}

  To construct a wheel with $\ell$ distinct hyperedges $K_1, K_2, \ldots, K_\ell$ and $\ell$ distinct $j$-sets $J_0,J_1,\ldots,J_\ell$, we first pick a $j$-set $J_0$  and choose the other $k-j$ vertices for $K_1$. Subsequently, if we have constructed $K_i$, we pick a $j$-set $J_i$ in $K_i$ that is different from any $J_{i'}$ picked out before, and choose another $k-j$ vertices to construct $K_{i+1}$. To make sure that $K_\ell$ constructed in the end includes the initial $j$-set $J_0$, we keep track of vertices in $J_0$. For each vertex $v$ in $J_0$, we say that $J_i$ \tdef{freezes} $v$ if $v \in J_{i'}$ for every $i'$ such that $i \leq i' \leq \ell$. If $J_i$ freezes $a_i$ many vertices, then $J_i$ must be chosen to contain all these $a_i$ vertices, and if $J_{i+1}$ freezes $a_{i+1}$ many vertices, then a further $a_{i+1}-a_{i}$ many vertices must be chosen from $J_0$ when selecting the $k-j$ vertices to construct $K_{i+1}$ from $J_i$. Note that every wheel can be obtained in our construction, which gives an over-counting, since for fixed $a_i$ we could inadvertently choose vertices such that some are frozen earlier than necessary.

  Let $\tau_0, \tau_1, \tau_2, \ldots, \tau_{j-1}$ be integers such that $\tau_d$ is the number of hyperedges in the constructed wheel that freeze $d$ vertices in $J_0$, \textit{i.e.}\ the number of $a_i$'s of value $d$. Since the $a_i$'s are non-decreasing by definition, we can deduce the $a_i$'s from the $\tau_i$'s and vice versa. We now consider the number of choices in each step. There are $\binom{n}{j}$ choices for $J_0$ and $\binom{n-j}{k-j}$ choices for the remaining vertices of $K_1$. Subsequently for each $J_i$ that freezes $a_i$ many vertices, there are $\binom{k-a_i}{j-a_i}-1$ choices. Now, to obtain from $J_i$ a $k$-set $K_{i+1}$ that can contain a $J_{i+1}$ wich freezes $a_{i+1}$ many vertices, there are $\binom{n-j-a_{i+1}+a_i}{k-j-a_{i+1}+a_i}$ choices.

  By definition, it is clear that only $J_\ell$ freezes all the $j$ vertices in $J_0$, thus $a_\ell=j$ and $a_i \leq j-1$ for all $i < \ell$. 
The number of constructions $w_\ell^\star(a_1, a_2, \ldots, a_\ell)$\label{par:wlstar} with $a_1, \ldots, a_\ell$  many vertices, frozen by $J_1, \ldots, J_\ell$ respectively, is bounded by (with $\tau_i$'s computed from the $a_i$'s)
  \begin{align*}
    w_\ell^\star(&a_1, a_2, \ldots, a_\ell) \leq \binom{n}{j} \binom{n-j}{k-j} \left[ \prod_{i=1}^{\ell-1} \binom{n-j-a_{i+1}+a_i}{k-j-a_{i+1}+a_i} \right] \prod_{m=0}^{j-1} \left( \binom{k-m}{j-m}-1 \right)^{\tau_m} \\
                                             &\leq \binom{n}{j} \binom{n-j}{k-j} \left[ c_0 \binom{n-j}{k-j} \right]^{\ell-1} \left[ \prod_{i=1}^{\ell-1} \left( \frac{k-j}{n-j-a_{i+1}+a_{i}} \right)^{a_{i+1}-a_i} \right] \prod_{m=1}^{j-1} \left( \frac{\binom{k-m}{j-m}-1}{c_0} \right)^{\tau_m} \\
                                             &\leq \frac{n^k}{j!(k-j)!\pg^{\ell-1}} \left( \frac{k-j}{n-2j} \right)^j \prod_{m=1}^{j-1} \left( \frac{\binom{k-m}{j-m}-1}{c_0} \right)^{\tau_m}.
  \end{align*}

  Noting that wheels are considered identical up to rotation and reversed order, and that $\binom{k-m}{j-m}-1 < c_0$ for $1 \leq m \leq j-1$, by summing over all possible $\tau_i$'s (and thus also $a_i$'s) we have
  \begin{align*}
    w_\ell &\leq \frac1{2\ell} \sum_{0 \leq a_1 \leq a_2 \leq \cdots \leq a_\ell=j} w_\ell^\star(a_1, a_2, \ldots, a_\ell) \\
    &\leq \frac{n^k}{2\ell \cdot j!(k-j)!\pg^{\ell-1}} \left( \frac{k-j}{n-2j} \right)^{j} \prod_{m=1}^{j-1} \sum_{\tau_m \geq 0} \left( \frac{\binom{k-m}{j-m}-1}{c_0} \right)^{\tau_m} \\
    &\leq \frac{n^{k-j}\left(1+\frac{3j^2}{n}\right)(k-j)^j}{2\ell \cdot j!(k-j)!\pg^{\ell-1} } \prod_{m=1}^{j-1} \left( 1 - \frac{\binom{k-m}{j-m}-1}{c_0} \right)^{-1} \leq \frac{c_w n^{k-j}}{\pg^{\ell-1}\ell},
  \end{align*}
  as required.\qed
  
\subsection{Proof of Lemma~\ref{lem:bound-wheels}} \label{sec:wheels:bound-wheels}

We first introduce the notion of dominance for comparing generating functions. Recall that given a generating function $F(z)$, we denote by $[z^n]F(z)$ the coefficient of $z^n$ in $F(z)$. For two generating functions $F(z)$ and $G(z)$, we say that $F(z)$ is \tdef{dominated} by $G(z)$ (denoted by $F(z) \preceq G(z)$) if for all $n \in \mathbb{N}$ we have $[z^n]F(z) \leq [z^n]G(z)$. For generating functions whose coefficients are non-negative integers, the dominance relation is clearly preserved under by addition, multiplication, differentiation by $z$ and composition.

We denote by $\cB^\circ$\label{class:Bcirc} the set $\cB \setminus \cB^-$, \textit{i.e.}\ the set of all rooted labelled two-type trees that do not correspond to hypertrees, and $\cB^\circ_s$ the corresponding set of elements with $s$ vertices of type $k$. We first look at the structure of elements in $\cB^\circ_s$. We define a \tdef{two-type unicycle}\label{term:twotypunic} as a labelled two-type graph obtained by attaching rooted labelled two-type trees or nothing to every vertex of type $j$ in a labelled two-type graph generated by hyperedges in a wheel and the $j$-sets they contain. Note that vertices may share the same label.

\begin{prop} \label{prop:decomp-non-hypertree}
  There is an injection from $\cB^\circ_s$ to the disjoint union of the three sets $Q_s^1, Q_s^2, Q_s^3$, where
  \begin{enumerate}
  \item $Q_s^1$ is the set of tuples $(T^\bullet, (T_1, T_2, \ldots, T_{c_0}))$ with $s-1$ vertices of type $k$ in total, where $T^\bullet \in \cB$ contains a marked non-root vertex $w$ of type $j$, and the $T_i$'s are either empty or in $\cB$, but the labels of their roots are fixed to the $c_0$ many $j$-subsets of the label of the parent of $w$, excluding that of $w$;
  \item $Q_s^2$ is the set of tuples $(C^{\bullet, k}, (T_1, T_2, \ldots, T_{c_0}))$ with $s-1$ vertices of type $k$ in total, where $C^{\bullet,k}$ is a two-type unicycle with a marked vertex $u$ of type $k$ in the cycle and a marked vertex of type $j$, and $T_i$'s are either empty or in $\cB$, but the labels of their roots are fixed to the $c_0$ many $j$-subsets of the label of the marked vertex $u$, excluding that of the parent of $u$ in a breadth-first search starting from the marked vertex of type $j$ using the order provided by labels;
  \item $Q_s^3$ is the set of tuples $(C^{\bullet, j}, T_0)$ with $s$ vertices of type $k$ in total, where $C^{\bullet, j}$ is a two-type unicycle with a marked vertex $u$ of type $j$ in the cycle and another marked vertex of type $j$, and $T_0$ is  either empty or in $\cB$, with the label of its root the same as that of $u$.
  \end{enumerate}
\end{prop}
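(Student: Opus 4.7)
The plan is to construct the injection via a canonical \emph{first repeated label} decomposition. Fix a deterministic breadth-first search on any labelled two-type tree, using lexicographic order on labels to break ties. For $T\in\cB^\circ_s$, let $w^*$ be the first vertex in this BFS whose label has already appeared, and let $w'$ be its earlier occurrence; necessarily $w^*$ and $w'$ share a type. A structural observation on the branching process --- namely that siblings at any common parent always carry distinct labels --- forces the tree-distance between $w^*$ and $w'$ to be at least $2$, with equality attainable only when both are of type $k$ and $w'$ is the grandparent of $w^*$ in $T$.

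I would then split into three cases, one for each target set. Case~(i) is when $w^*$ is of type $k$ and $w'$ is its grandparent, which maps to $Q_s^1$: set $w$ to be the type-$j$ parent of $w^*$, delete $w^*$ together with its $c_0$ type-$j$ children from $T$ to obtain $T^\bullet$ with $w$ marked, and let the $c_0$ subtrees formerly hanging off those deleted children be $T_1,\dots,T_{c_0}$. The label of $w$'s parent in $T^\bullet$ coincides with $w^*$'s label, so the root labels of the $T_i$'s are automatically the $c_0$ $j$-subsets of that label excluding $w$'s own label, and the type-$k$ count drops from $s$ to $s-1$, as required. Case~(ii) is when $w^*$ and $w'$ are both of type $k$ at tree-distance $\geq 4$, which maps to $Q_s^2$: identifying $w^*$ with $w'$ collapses the unique $w'$-to-$w^*$ path into a genuine wheel, producing a two-type unicycle $C^{\bullet,k}$; mark the merged vertex as $u$, mark the type-$j$ parent of $w^*$ as the auxiliary marked $j$-vertex, and detach the $c_0$ subtrees rooted at $w^*$'s type-$j$ children as $T_1,\dots,T_{c_0}$. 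The BFS from the marked $j$-vertex then reaches $u$ via $w^*$'s former parent, forcing the required root-label constraint, and again $s$ drops to $s-1$. Case~(iii) is when $w^*$ is of type $j$, which maps to $Q_s^3$: the analogous identification yields a unicycle $C^{\bullet,j}$ with the merged type-$j$ vertex marked as $u$; choose the second marked $j$-vertex to be, say, $w'$, and let $T_0$ be the subtree of $T$ rooted at $w^*$, whose root label matches $u$. Since no type-$k$ vertex is deleted or merged here, the total remains $s$.

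The main obstacle is injectivity, since the map is destructive. Disjointness of the target sets is structural: $Q_s^1$ has no cycle in the first component, while $Q_s^2$ and $Q_s^3$ are distinguished by the type of the in-cycle marked vertex. Within each case, injectivity reduces to the observation that the canonical BFS order, together with the marked data, pinpoints both where $w^*$ sat and how to reattach the detached piece. In case~(i) this is immediate: the unique reconstruction adds a type-$k$ child to $w$ carrying the label of $w$'s parent and attaches the $T_i$'s in the positions forced by their root labels. In cases~(ii) and~(iii) the unique cycle in $C^{\bullet,\cdot}$ together with the two marked vertices determines which preimage of $u$ plays the role of $w^*$ (the one \emph{not} adjacent to the marked auxiliary in the BFS-priority direction), after which the $T_i$'s or $T_0$ reattach in the unique admissible way. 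The remaining checks --- that the reconstructed object lies in $\cB^\circ_s$ and that its canonical BFS first repeats precisely at the recovered $w^*$ --- are routine but form the bulk of the formal verification.
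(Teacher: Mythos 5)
Your overall strategy is the same as the paper's: run the BFS (with label order) until the first repeated label, split into the three cases (grandparent repetition of type $k$, distant repetition of type $k$, repetition of type $j$), delete or merge accordingly, and argue injectivity by exhibiting the inverse. Your case (i) matches the paper's and is fine (the cosmetic difference of whether the $T_i$'s keep their type-$j$ roots is immaterial, since those root labels are fixed).

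There is, however, a genuine gap in your cases (ii) and (iii): you discard the location of the root. In case (ii) you mark the merged type-$k$ vertex $u$ and the type-$j$ \emph{parent of $w^*$}; in case (iii) your ``second marked $j$-vertex'' is $w'$, which is the merged vertex itself, so the second mark carries no information at all. The paper instead marks the \emph{original root} of the tree as the auxiliary type-$j$ vertex, and this is exactly what makes the inverse well defined: after splitting $u$ back into two vertices and reattaching the $T_i$'s (or $T_0$), one must still know at which type-$j$ vertex to root the reconstructed tree, since elements of $\cB^\circ_s$ are \emph{rooted} trees. With your marking the map is not injective: take a labelled two-type tree with a single repeated pair of $k$-labels $x,y$ at distance at least $4$, and two distinct type-$j$ vertices $r_1\neq r_2$ lying on the same side of $y$ (say both far from $y$ and close to $x$), so that rooting at either $r_1$ or $r_2$ the BFS meets $x$ before $y$ and the first repeat is $y$, with the same parent and the same subtrees below $y$. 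Both rootings are valid elements of $\cB^\circ_s$, yet they are mapped to the identical tuple $(C^{\bullet,k},(T_1,\dots,T_{c_0}))$, because nothing in your image records whether the root was $r_1$ or $r_2$; your reconstruction recipe (``split $u$ at the marked auxiliary, reattach'') indeed recovers the underlying labelled tree but never says, and cannot decide, where to root it. The fix is the paper's choice: keep your cycle-adjacent information implicit (as the paper does via the BFS-from-the-marked-vertex convention in the definition of $Q_s^2$, or via the smaller-label rule in the reconstruction) and spend the type-$j$ mark on the original root, in both case (ii) and case (iii).
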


\begin{proof}

 Let $C$ be an element of $\cB^\circ_s$. Since $C$ does not correspond to a hypertree, we know that $C$ has at least a pair of vertices with the same label. We perform a breadth-first search (with the order provided by labels), until we meet a vertex $v$ with the same label as some other vertex $u$ that comes before. There are three possibilities:
  \begin{enumerate}[(1)]
  \item $u, v$ are of type $k$, and $u$ is the grandparent of $v$;
  \item $u,v$ are of type $k$, and are at distance at least $4$;
  \item $u,v$ are of type $j$.
  \end{enumerate}

In case (1), we take out $v$ and its $c_0$ sub-trees $T_1, \ldots, T_{c_0}$ rooted at $j$-sets, and we mark the vertex of type $j$ between $u$ and $v$. We thus get a rooted labelled two-type tree $T^\bullet$ with one marked vertex of type $j$ along with the $T_i$'s. Since $C \in \cB^\circ_s$ but $v$ has been removed, we have $s-1$ vertices of type $k$. Therefore, the tuple $(T^\bullet, (T_1, T_2, \ldots, T_{c_0}))$ is in $Q_s^1$.

  In case (2), we take out the $c_0$ sub-trees $T_1, \ldots, T_{c_0}$ of $v$, we merge $v$ and $u$ and we mark $u$. The merging gives a cycle formed by the path from $u$ and $v$ to their lowest common ancestor in the tree, which corresponds to a wheel because no other vertices have the same label in the explored part. We also have two-type trees attached to its type $j$ vertices. We also regard the original root as a marked vertex of type $j$. This is the two-type unicycle $C^{\bullet, k}$. Since $C \in \cB^\circ_s$ and $v$ has been merged with $u$, we have $s-1$ vertices of type $k$. Labels of the root of the $T_i$'s (when not empty) are fixed to be subsets of size $j$ of the label of $v$, excluding the label of its (original) parent. Therefore, the tuple $(C^{\bullet, k}, (T_1, T_2, \ldots, T_{c_0}))$ is in $Q_s^2$.

  In case (3), by construction, the parents of $u$ and $v$ are different. We now separate off the sub-tree $T_0$ rooted at $v$, and merge $u$ and $v$. Similarly to the second case, we have a wheel with a marked vertex of type $j$ and two-type trees attached to its $j$-sets, and also the root as a marked vertex of type $j$ that can be anywhere. This is the two-type unicycle $C^{\bullet, j}$. No type $k$ vertex has been removed or merged, so there are $s$ in total. The label of the root of $T_0$ is fixed to be the label of $u$. Therefore, the tuple $(C^{\bullet, j}, T_0)$ is in $Q_s^3$.

  To show that this indeed defines an injection, we only need to observe that for each construction, the reverse direction has only one possibility. We can identify which of the three cases we are in from the first element of the resulting tuple.
  In case (1), given $(T^\bullet, (T_1, T_2, \ldots, T_{c_0})) \in Q_s^1$, we add a new child to the marked vertex $w$ of type $j$ with the same label as the parent of $w$, and attach all $T_i$'s as its sub-trees. The root of the new tree is the root of $T^\bullet$.
  In case (2), given $(C^{\bullet, k}, (T_1, T_2, \ldots, T_{c_0})) \in Q_s^2$, the marked vertex $u$ of type $k$
  is adjacent to two vertices $v_1,v_2$ of type $j$ on the cycle (where $v_1$ has the smaller of the two labels).
  We replace $u$ by $u_1$ and $u_2$, and connect $u_1$ to $v_1$ and $u_2$ to $v_2$ (thus breaking up the cycle
  into a path). Additionally, all other vertices which were adjacent to $u$ in $C^{\bullet, k}$ are connected
  to $u_1$. Finally the roots of the $T_i$'s are connected to $u_2$. We thus obtain, a tree which we root
  at the marked vertex of type $j$.
  The construction for case (3) is similar to that of case (2).
\end{proof}

Before computing $B_s^- = |\cB_s^-|$ (Lemma~\ref{lem:bound-wheels}), we give a technical lemma in the spirit of Laplace's method.

\begin{lem} \label{lem:laplace-app}
  Given any fixed integer $a \geq 1$, for $s \geq (16a)^2$, we have
  \[
  \sum_{i = 1}^s \frac{i^a s_{(i)}}{s^{i}} \leq 5 (2a)^{a/2} s^{(a+1)/2}.
  \]
\end{lem}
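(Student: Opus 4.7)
The plan is to replace the summand by a smooth continuous approximation and bound the resulting sum by a Gaussian-like integral, in the classical spirit of the Laplace method. The starting estimate comes from applying $\log(1-t) \leq -t$ termwise:
\[
\frac{s_{(i)}}{s^i} = \prod_{m=0}^{i-1}\Bigl(1 - \tfrac{m}{s}\Bigr) \leq \exp\Bigl(-\tfrac{i(i-1)}{2s}\Bigr),
\]
so it suffices to bound $\sum_{i \geq 1} f(i)$ where $f(x) := x^a \exp(-x(x-1)/(2s))$. A direct computation of $f'(x)/f(x) = a/x - (2x-1)/(2s)$ shows that $f$ is unimodal on $[0,\infty)$, with a unique maximum $x_\star$ satisfying $x_\star^2 - x_\star/2 = as$. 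The hypothesis $s \geq (16a)^2$ gives $x_\star \leq \sqrt{as} + 1/2$, safely inside $[0,s]$, so extending the summation range to $i \geq s$ costs nothing.

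I would then apply the standard integral comparison for unimodal nonnegative functions vanishing at $0$,
\[
\sum_{i=1}^\infty f(i) \leq f(x_\star) + \int_0^\infty f(x)\, dx,
\]
and handle the two pieces separately. The peak contribution is at most $x_\star^a \leq (1+o(1))(as)^{a/2}$, which is smaller by a factor $O(1/\sqrt{s})$ than the target. For the integral I would split at $x = 2$: on $[0,2]$ the exponential factor is bounded by a universal constant, giving a tail of order $2^a/(a+1)$, negligible compared with $s^{(a+1)/2}$ since $s \geq (16a)^2$. On $[2, \infty)$ one has $x(x-1) \geq x^2/2$, hence $f(x) \leq x^a e^{-x^2/(4s)}$, and the substitution $u = x^2/(4s)$ yields the clean identity
\[
\int_0^\infty x^a e^{-x^2/(4s)}\, dx = 2^a\, s^{(a+1)/2}\, \Gamma\bigl(\tfrac{a+1}{2}\bigr).
\]
Finally I would bound $\Gamma\bigl(\tfrac{a+1}{2}\bigr) \leq (a/2)^{(a-1)/2}$, which can be checked by hand for $a \in \{1,2\}$ and extended to all $a \geq 1$ by induction using $\Gamma(z+1) = z\Gamma(z)$.

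Combining the ingredients, the dominant contribution is $2^a (a/2)^{(a-1)/2} s^{(a+1)/2} = 2^{(a+1)/2} a^{(a-1)/2} s^{(a+1)/2} = \sqrt{2/a}\cdot(2a)^{a/2} s^{(a+1)/2}$, leaving ample slack to absorb the peak term and the $[0,2]$ remainder within the constant $5$. The main obstacle is purely bookkeeping: the estimate must hold uniformly in $a \geq 1$ and in $s \geq (16a)^2$ rather than only asymptotically, so I would verify directly that the small-$a$ cases (where $\Gamma((a+1)/2)$ is close to $1$ and the exponential peak factor $e^{-a/2}$ offers no help) stay under the universal constant, while for larger $a$ there is considerable room to spare.
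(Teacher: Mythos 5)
Your proposal is correct and follows essentially the same route as the paper: bound $s_{(i)}/s^{i}$ by a Gaussian-type factor, compare the resulting unimodal sum with its peak value plus an integral over $[0,\infty)$, and evaluate that integral to get the $(2a)^{a/2}s^{(a+1)/2}$ scale. The only differences are cosmetic — you evaluate the integral exactly as $2^{a}s^{(a+1)/2}\Gamma\bigl(\tfrac{a+1}{2}\bigr)$ and bound the Gamma factor by induction, whereas the paper rescales around the peak and uses $\log\alpha<\alpha^{2}/4$; both yield the constant $5$ with room to spare (and your bound $e^{-i(i-1)/(2s)}$ is in fact valid at $i=1$, where the paper's $e^{-i^{2}/(4s)}$ is slightly off).
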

\begin{proof}
  We first observe that
  \[
  \frac{i^a s_{(i)}}{s^i} = \exp\left( a \log i + \sum_{b=0}^{i-1} \log \left( 1 - \frac{b}{s} \right) \right) \leq \exp\left( a \log i - \frac{i^2}{4s} \right).
  \]
  Let $S(i) = a \log i - i^2 / (4s)$. The maximum of $S(i)$ (viewed as a function on $\mathbb{R}$ rather than $\mathbb{N}$) occurs at $i_{opt} = (2as)^{1/2}$, with value $S(i_{opt}) = \frac{a}{2}(\log(2as)-1)$. Since $S''(i) = -a/i^2 - 1/(2s) < 0$ for $1 \leq i \leq s$, we know that $S(i)$ is concave in this range. We also observe that, for $\alpha > 0$, we have
\begin{equation} \label{eq:Salphaiopt}
S(\alpha i_{opt}) = a \log i_{opt} + a \log \alpha - a \frac{(\alpha i_{opt})^2}{2i_{opt}^2} = S(i_{opt}) + a \left(\log \alpha + \frac1{2} - \frac{\alpha^2}{2} \right).
\end{equation}

  Let $i^* = \lceil (16as \log s)^{1/2} \rceil$. It is clear that $i^* \geq i_{opt} + 1$ for $s \geq (16a)^2$ with $a \geq 1$. Since $S(i)$ is concave and $i^* \geq i_{opt}+1$, we have 
  \begin{align*}
    S(i^*) \leq S((16as\log s)^{1/2}) & = a \log \left((16as \log s)^{1/2}\right) - \frac{16as \log s}{4s} \\
    &= \frac{a}{2} \left(\log s + \log(16a) + \log\log(s)\right) - 4a\log s \\
    &\leq \frac{a}{2}(\log\log s - 7\log s) + \frac{a}{2}\log(16a),
  \end{align*}
  which is decreasing in $s$. In the case $s=(16a)^2$, we have $S(i^*) = a \log \log (16a) - \frac{13a}{2} \log(16a)$, which is clearly negative for $a \geq 1$. Therefore, $S(i^*) < 0$ holds for all $s \geq (16a)^2$. Therefore, by the concavity of $S(i)$ and the fact that $i_{opt} < i^*$ for our range of $s$, we have
  \begin{align*}
    \sum_{i=1}^s \frac{i^a s_{(i)}}{s^{i}} &\leq \sum_{i=1}^{i^*} \exp\left(a \log i - \frac{i^2}{4s} \right) + s \leq s + \exp(S(i_{opt})) + \int_{1}^{i^*} \exp(S(x)) dx \\
                                         &\stackrel{\eqref{eq:Salphaiopt}}{\leq} s + \exp(S(i_{opt})) + i_{opt} \exp(S(i_{opt})) e^{a/2} \int_{0}^{+\infty} \exp(a(\log \alpha - \alpha^2/2)) d\alpha \\
                                         &\leq s + e^{-a/2}(2as)^{a/2} + (2as)^{(a+1)/2} \int_{0}^{+\infty} \exp(-a\alpha^2/4)d\alpha \\
                                         &\leq s + (2as)^{a/2} + 2 \sqrt{\pi} (2a)^{a/2} s^{(a+1)/2} \\
                                         &\leq (\sqrt{1/2}+1+2\sqrt{\pi}) (2a)^{a/2} s^{(a+1)/2} \leq 5 (2a)^{a/2}s^{(a+1)/2}.
  \end{align*}
  In the third line we used $\log \alpha < \alpha^2 /4$, which holds for all $\alpha > 0$. This is because $\log \alpha - \alpha^2 /4$ takes its maximum at $\alpha=\sqrt{2}$, where it has a negative value.
\end{proof}

  We now consider the generating function of two-type graphs corresponding to wheels
\begin{equation} \label{eq:w}
  w(z) = \sum_{\ell \geq 2} w_\ell z^\ell,
\end{equation}  
 and that of the set $\cB_s$ and $\cB_s^\circ$, denoted by $B(z)$ and $B^\circ(z)$ respectively. By Proposition~\ref{prop:decomp-non-hypertree}, we can partition $\cB^\circ_s$ into three disjoint subsets $\cB^{\circ,1}_s$, $\cB^{\circ,2}_s$ and $\cB^{\circ,3}_s$ (\textit{i.e.}\ the preimages of $Q_s^1$, $Q_s^2$ and $Q_s^3$ under the injection) with generating functions $B^{\circ,1}(z)$, $B^{\circ,2}(z)$, $B^{\circ,3}(z)$ respectively. We thus have $B^\circ(z) = B^{\circ,1}(z) + B^{\circ,2}(z) + B^{\circ,3}(z)$, where $[z^s]B^{\circ,i}(z) = |\cB^{\circ,i}_s|$ for $i=1,2,3$. 

  From Lemma~\ref{lem:wheelconfigs} and the fact that a wheel consists of at least 2 hyperedges, we have
  \begin{equation} \label{eq:boundw}
    w(z) \preceq c_w n^{k-j} \pg \left(\log \frac1{1-\pg^{-1}z} - \pg^{-1}z \right).
  \end{equation}
  The generating function $w^{\bullet}(z)$\label{genfun:wbullet} of two-type graphs corresponding to wheels with a marked $k$-set is therefore
  \begin{equation} \label{eq:boundwbullet}
  w^\bullet(z) = \frac{zd}{dz}w(z) \preceq \frac{c_w n^{k-j} p_0^{-1} z^2}{1-p_0^{-1}z}.
  \end{equation}

  The generating function of two-type graphs corresponding to wheels with a marked vertex of type $j$ is dominated by $(c_0+1) w^{\bullet}(z)$, since vertex of type $k$ in such two-type graphs is adjacent to $(c_0+1)$ vertices of type $j$.

  We recall that $T_J(z)$ is the generating function of unlabelled two-type trees (defined in \eqref{eq:T_J}). The generating function of labelled two-type trees with a given $j$-set as root label is given by $T_J(p_0^{-1} c_0^{-1} z)$, since for each vertex of type $k$, there are $\binom{n-j}{k-j} = p_0^{-1} c_0^{-1}$ choices to complete its label from that of its parent, which is of type $j$ and has a $j$-set as label. We have an extra factor of $\binom{n}{j}$ when the root label is not given. We recall (\ref{eq:T-in-Lambert}), where $T_J(z)$ is expressed with the Lambert $W$-function $W(z)$, which satisfies the equation $z=W(z)\exp(W(z))$:
  \[
    T_J(z) = \exp\left( -c_0^{-1} W(-c_0z) \right) - 1.
  \]
  Hence, 
  \begin{equation}\label{eq:misc-1}
    1 + T_J(p_0^{-1} c_0^{-1} z) = \exp(-c_0^{-1}W(-p_0^{-1} z)).
  \end{equation}
  By differentiating the equation $z=W(z)\exp(W(z))$, we have the following expression of $\frac{d}{dz}W(z)$:
  \begin{equation} \label{eq:derivW}
      \frac{d}{dz}W(z) = \frac{1}{\exp(W(z)) (1+W(z))} = \frac{W(z)}{z(1+W(z))}.
  \end{equation}

We can now also compute the derivative of $T_J(z)$ as
\begin{align} 
  \frac{d}{dz}T_J(z) &= \exp(-c_0^{-1}W(-c_0 z)) \cdot (-c_0^{-1}) \cdot \frac{d}{dz}(W(-c_0 z)) \nonumber \\
  &= - \exp(-c_0^{-1}W(-c_0 z)) \frac{W(-c_0 z)}{c_0 z(1+W(-c_0 z))} \label{eq:T-diff}.
\end{align}

We can then give the following dominance relation for the derivative of $w^\bullet(z)$, which will be used later. To simplify the notation, we define 
\[
W_0 = W_0(z) = W(-p_0^{-1} z).
\]
Since $W(z)=z\exp(-W(z))$, we have
\begin{equation} \label{eq:misc-3}
\exp(-W_0) = \frac{W_0}{-p_0^{-1} z}.
\end{equation}
Furthermore, by \eqref{eq:derivW}, we also have
\begin{equation} \label{eq:misc-2}
\frac{d}{dz}W_0 = \frac{W_0}{z (1+W_0)}.
\end{equation}
We thus have
  \begin{align*}
  &\quad \quad \frac{zd}{dz}\left[w^\bullet(z(1+T_J(c_0^{-1}p_0^{-1}z))^{c_0}\right] \stackrel{\eqref{eq:T-in-Lambert}}{=} \frac{zd}{dz}\left[w^\bullet\left(z \exp\left(-W(-p_0^{-1} z)\right) \right) \right] \\
  &\stackrel{\eqref{eq:misc-3}}{=} \frac{zd}{dz}\left[w^\bullet( - p_0 W_0) \right] \; \stackrel{\eqref{eq:boundwbullet}}{\preceq} \; z c_w n^{k-j} p_0^{-1} \frac{d}{dz} \left( \frac{p_0^2 W_0^2}{1+W_0} \right) \\
  &\stackrel{\eqref{eq:misc-2}}{=} \frac{z c_w n^{k-j} p_0 W_0 \left(2 + W_0\right)}{\left( 1 + W_0\right)^2} \cdot \frac{W_0}{z (1+W_0)} = \frac{c_w n^{k-j} p_0 W_0^2 \left(2 + W_0\right)}{\left( 1 + W_0\right)^3}.  
  \end{align*}
  Therefore, we have
  \begin{align} 
  \frac{zd}{dz}\left[w^\bullet(z(1+T_J(c_0^{-1}p_0^{-1}z))^{c_0}\right] &= \frac{c_w n^{k-j} p_0 W_0^2 \left(2 + W_0 \right)}{\left( 1 + W_0\right)^3} \nonumber \\
  &= c_w n^{k-j} p_0 \sum_{i \geq 2} \frac{(-1)^i (i-1) (i+2)}{2} W_0^i, \label{eq:diff-w-bullet}
  \end{align}
where we have used the expansion
\[
\frac{x^2 (2+x)}{(1+x)^3} = \sum_{i \geq 2} \frac{(-1)^i (i-1) (i+2)}{2} x^i.
\]

We now consider the generating functions $B^{\circ,1}(z), B^{\circ,2}(z), B^{\circ,3}(z)$ arising from
Proposition~\ref{prop:decomp-non-hypertree}. For $B^{\circ,1}(z)$, using \eqref{eq:derivW}
and observing that in any
two-type tree the number of non-root type $j$ vertices is $c_0 s$ times the number of type $k$ vertices,
we have
  \begin{align*}
    B^{\circ,1}(z) &\preceq c_0 z \left( \frac{zd}{dz} \binom{n}{j} T_J(p_0^{-1} c_0^{-1} z) \right) (1+T_J(p_0^{-1}c_0^{-1}z))^{c_0} \\
    &\stackrel{\eqref{eq:misc-1}, \eqref{eq:T-diff}}{=} c_0 z^2  \binom{n}{j} (p_0^{-1}c_0^{-1})
    (-\exp(-c_0^{-1}W_0))
    \frac{W_0}{p_0^{-1}z(1+W_0)}\exp(-W_0)\\ 
                &= - z \binom{n}{j} \frac{W_0}{(1+W_0)} \exp(-(1+c_0^{-1}) W_0)) \\
                &= \binom{n}{j} z \sum_{i \geq 1} \sum_{r \geq 0} \frac{(-1)^{i+r} (1+c_0^{-1})^r}{r!} W_0^{i+r}.
  \end{align*}
  The extra factor $z$ in the initial domination comes from the change of the number of vertices of type $k$ in the injection. Therefore, by (\ref{eq:Lambert-power}) and using the fact that $i+r \leq i(r+1)$ for integers $i \geq 1$, $r \geq 0$, we have

  \begin{align*}
    |\cB^{\circ,1}_s| &= [z^s]B^{\circ,1}(z) \leq \binom{n}{j} \sum_{i \geq 1} \sum_{r \geq 0} \frac{(-1)^{i+r+s} (i+r) (1+c_0^{-1})^r (-s+1)^{s-i-r-2}}{p_0^{s-1}(s-1-r-i)!r!} \\
    &=c_0^{s-1} \binom{n}{j} \binom{n-j}{k-j}^{s-1} \sum_{i \geq 1} \sum_{r \geq 0} \frac{i(r+1)(1+c_0^{-1})^r (s-1)^{s-i-r-2}}{(s-1-r-i)!r!} \\
    &=c_0^{s-1} \binom{n}{j} \binom{n-j}{k-j}^{s-1} \sum_{i \geq 1}  \frac{i(s-1)^{s-i-2}}{(s-1-i)!} \sum_{r \geq 0} \frac{(r+1)(1+c_0^{-1})^r (s-1-i)_{(r)}}{(s-1)^r r!} \\
    &\leq \binom{n}{j} \binom{n-j}{k-j}^{s-1} \frac{c_0^{s-1} (s-1)^{s-2}}{(s-1)!} \sum_{i=1}^{s-1}  \frac{i (s-1)_{(i)}}{(s-1)^{i}} \sum_{r \geq 0} \frac{(r+1)2^r}{r!} \\
    &\leq (1+3e^2) \binom{n}{j} \binom{n-j}{k-j}^{s-1} \frac{c_0^{s-1} s^{s-1}}{s!} \sum_{i = 1}^{s-1}  \frac{i s_{(i)}}{s^{i}}.
  \end{align*}
  Now using Lemma~\ref{lem:laplace-app} with $a=1$, for $s \geq 256$ we have
  \[
  |\cB^{\circ,1}_s| \leq (1+3e^2) \binom{n}{j} \binom{n-j}{k-j}^{s-1}  5 \cdot 2^{1/2} \frac{c_0^{s-1} s^{s}}{s!} = O(s n^{j-k}) B_s,
  \]
where the last equality follows from Lemma~\ref{lem:branchingconfigs}.
  
Now for $B^{\circ,2}$ we have
  \begin{align*}
    B^{\circ,2}(z) &\preceq (c_0+1) z \frac{zd}{dz}\left[w^\bullet(z(1+T_J(c_0^{-1}p_0^{-1}z))^{c_0})\right] (1+T_J(c_0^{-1}p_0^{-1}z))^{c_0} \\
                   &\stackrel{\eqref{eq:diff-w-bullet}}{\preceq} 2c_0 z c_w n^{k-j} p_0 \sum_{i \geq 2} \frac{(-1)^i (i-1) (i+2)}{2} W_0^i \exp(-W_0) \\
                   &= 2 z c_0 c_w n^{k-j} p_0 \sum_{i\geq 2} \sum_{r \geq 0} \frac{(-1)^{r+i} (i-1)(i+2)}{2 \cdot r!} W_0^{i+r}.
  \end{align*}
  Therefore, by (\ref{eq:Lambert-power}), again using the bound $i+r \leq i(r+1)$, we have
  \begin{align*}
    |\cB_s^{\circ,2}| &\le [z^s]B^{\circ,2}(z) \leq 2 c_0 c_w n^{k-j} p_0 \sum_{i\geq 2} \sum_{r \geq 0} \frac{(i-1)(i+2)(i+r)(s-1)^{s-r-i-2}}{2 p_0^{s-1} r!(s-r-i-1)!} \\
    &\leq 2 (k-j)!c_0^{s-1} c_w \binom{n-j}{k-j}^{s-1} \sum_{i\geq 2} \frac{i^2(i+1)(s-1)^{s-i-2}}{(s-i-1)!} \sum_{r \geq 0} \frac{(r+1)(s-i-1)_{(r)}}{(s-1)^r r!} \\
    &\leq 2(k-j)!c_0^{s-1} c_w \binom{n-j}{k-j}^{s-1} \frac{(s-1)^{s-2}}{(s-1)!} \sum_{i=2}^{s-1} \frac{2 i^3 (s-1)_{(i)}}{(s-1)^i} \sum_{r \geq 0} \frac{r+1}{r!} \\
    &\leq 4(k-j)! c_0^{s-1}  c_w \binom{n-j}{k-j}^{s-1} \frac{s^{s-1}}{s!} \sum_{i=2}^{s} \frac{i^3 s_{(i)}}{s^i} (1+2e).
  \end{align*}
  Now using Lemma~\ref{lem:laplace-app} with $a=3$ and Lemma~\ref{lem:branchingconfigs}, for $s \geq 2304$ we have
  \[
  |\cB_s^{\circ,2}| \leq 4  (k-j)! (1+2e) c_w \binom{n-j}{k-j}^{s-1} 5\cdot 6^{3/2} \frac{c_0^s s^{s+1}}{s!} = O(s^2 n^{-k}) B_s. \\
  \]

  For $B^{\circ,3}$, using (\ref{eq:diff-w-bullet}), we have
  \begin{align*}
    B^{\circ,3}(z) &\preceq (c_0+1) \frac{zd}{dz}\left[(c_0+1) w^\bullet(z(1+T_J(c_0^{-1}p_0^{-1}z))^{c_0})\right] (1+T_J(c_0^{-1}p_0^{-1}z)) \\
                   &\stackrel{\eqref{eq:diff-w-bullet}}{\preceq} (c_0+1)^2 c_w n^{k-j} p_0 \sum_{i \geq 2} \frac{(-1)^i (i-1) (i+2)}{2} W_0^i \exp(-c_0^{-1} W_0) \\
                   &= (c_0+1)^2  c_w n^{k-j} p_0 \sum_{i\geq 2} \sum_{r \geq 0} \frac{(-1)^{r+i} (i-1)(i+2)}{2c_0^r r!} W_0^{i+r}.
  \end{align*}
  Therefore, by (\ref{eq:Lambert-power}) and the bound $i+r \leq i(r+1)$, we have
  \begin{align*}
    |\cB_s^{\circ,3}| &= [z^s]B^{\circ,3}(z) \le (c_0+1)^2 c_w n^{k-j} p_0 \sum_{i\geq 2} \sum_{r \geq 0} \frac{(i-1)(i+2)(i+r)s^{s-r-i-1}}{2 c_0^r r! (s-r-i)! p_0^{s}} \\
    &\leq \frac{4c_0^2 \cdot 2(k-j)! \cdot c_0^{s-1} c_w s^{s-1}}{s!} \binom{n-j}{k-j}^s \sum_{i\geq 2} \sum_{r \geq 0} \frac{i^2 (i+1) (r+1) s_{(r+i)}}{2 c_0^r r! s^{r+i}} \\
    &\leq \frac{8(k-j)!c_0^{s+1} c_w s^{s-1}}{s!} \binom{n-j}{k-j}^s \sum_{i=2}^{s} \frac{i^3 s_{(i)}}{s^i} \sum_{r \geq 0} \frac{(r+1) (s-i)_{(r)}}{c_0^r r! s^{r}} \\
    &\leq \frac{8(k-j)!c_0^{s+1} c_w s^{s-1}}{s!} \binom{n-j}{k-j}^s \sum_{i=2}^{s} \frac{i^3 s_{(i)}}{s^i} \sum_{r \geq 0} \frac{r+1}{r!} \\
    &= \frac{8(k-j)!(1+2e) c_0^{s+1} c_w s^{s-1}}{s!} \binom{n-j}{k-j}^s \sum_{i=2}^{s} \frac{i^3 s_{(i)}}{s^i}.
  \end{align*}
  Again, using Lemma~\ref{lem:laplace-app} with $a=3$ and Lemma~\ref{lem:branchingconfigs}, for $s \geq 2304$ we have
  \[
  |\cB_s^{\circ,3}| \leq \frac{8(k-j)!(1+2e) c_0^{s+1} c_w \cdot 5 \cdot 6^{3/2} s^{s+1}}{s!} \binom{n-j}{k-j}^s = O(s^2 n^{-j}) B_s.
  \]
  
  Putting everything together,
  \[ |\cB_s^{\circ}| = \left( O (sn^{j-k}) + O(s^2 n^{-k}) + O(s^2 n^{-j})\right)  B_s = \left( O (sn^{j-k})  + O(s^2 n^{-j})\right) B_s. \]
Since $B_s^- = B_s - |\cB_s^{\circ}|$, this completes the proof.
Note that we only need $s \geq 2304$ for all conditions concerning Lemma~\ref{lem:laplace-app} to be fulfilled. \qed

\subsection{Proof of Lemma~\ref{lem:non-hypertree-bound}} \label{sec:wheels:non-hypertree}

Any non-hypertree $j$-component contains a wheel, since this is the only obstacle for a component to be a hypertree. We denote by $N_{\geq s}$\label{var:Ns} the number of non-hypertree components of size at least $s$. We consider the following \tdef{wheel-based branching process}\label{term:wbbranchproc}:  we start with a family of $\ell$ many $k$-sets that is the family of $k$-sets in a possible wheel and check if they exist as hyperedges in $\cH$; if so, we perform $c_0 \ell$ branching process, starting from each $j$-set contained in $k$-sets of the wheel (with possible duplications if some $j$-set belongs to more than two hyperedges or if hyperedges intersect in more than $j$ vertices). Since a wheel of length $\ell$ has at most $c_0\ell$ many $j$-sets, by the same argument as in Lemma~\ref{lem:branchcoupling}, we know that the expected number of non-hypertree $j$-component of size at least $s$ is bounded from above by the expected number of wheel-based branching processes of size at least $s$. Let $N'_{\geq s}$\label{var:N's} be the number of instances of wheel-based branching processes of size at least $s$, starting from every possible family of $k$-sets that could form a wheel. Then we have
\[
  \EX(N_{\geq s}) \leq \EX(N'_{\geq s}).
\]

Let $u_s$\label{par:Us} be the number of possible two-type unicycles with $s$ vertices of type $k$ and with a marked vertex of type $j$. We first bound $u_s$.

\begin{lem} \label{lem:non-hypertree-config}
  For $s \geq 1024$, we have
  \[
    u_s \leq 122 c_0^2 c_w n^{k-j} p_0^{1-s} \frac{s^{s+1/2}}{s!}.
  \]
\end{lem}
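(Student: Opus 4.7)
The plan is to bound $u_s$ via a generating-function decomposition of a two-type unicycle into a wheel with trees attached, and then extract the $[z^s]$ coefficient using the Lambert $W$-machinery already developed in Section~\ref{sec:wheels:bound-wheels}. A two-type unicycle is built from a wheel of some length $\ell\geq 2$ (contributing a factor $w_\ell$ by Lemma~\ref{lem:wheelconfigs}) together with a rooted labelled two-type tree (possibly empty) attached at each of its at most $\ell c_0$ type-$j$ vertices. Writing $\tilde T(z):=T_J(c_0^{-1}p_0^{-1}z)$, this gives, for the generating function $U(z)$ of unmarked unicycles (with $z$ marking type-$k$ vertices), the bound $U(z) \preceq w\bigl(z(1+\tilde T(z))^{c_0}\bigr)$. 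Since every type-$j$ vertex of a unicycle has at least one type-$k$ neighbour, over-counting by ``mark a type-$k$ vertex and then one of its $(c_0+1)$ type-$j$ neighbours'' yields
\[
u_s \;\leq\; (c_0+1)\,sU_s \;=\; (c_0+1)\,[z^s]\,z\frac{d}{dz}\,U(z).
\]

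Next, setting $W_0:=W(-p_0^{-1}z)$, the identities $1+\tilde T(z)=\exp(-c_0^{-1}W_0)$ and $W_0=-p_0^{-1}z\exp(-W_0)$ combine to give $z(1+\tilde T(z))^{c_0}=-p_0 W_0$. Using $\frac{dW_0}{dz}=\frac{W_0}{z(1+W_0)}$ together with the bound $w^\bullet(z)\preceq c_w n^{k-j}p_0^{-1}z^2/(1-p_0^{-1}z)$ from \eqref{eq:boundwbullet}, a computation in the spirit of \eqref{eq:diff-w-bullet} gives
\[
z\frac{d}{dz}\bigl[w(-p_0W_0)\bigr] \;=\; \frac{w^\bullet(-p_0W_0)}{1+W_0} \;\preceq\; \frac{c_w n^{k-j} p_0\, W_0^2}{(1+W_0)^2}.
\]
Expanding $x^2/(1+x)^2=\sum_{i\geq 2}(-1)^i(i-1)\,x^i$ and applying the Lagrange formula \eqref{eq:Lambert-power} to each $W_0^i$ (with the sign cancellations from $(-p_0^{-1})^i$), one obtains
\[
[z^s]\,\frac{W_0^2}{(1+W_0)^2} \;=\; \frac{s^{s-1}}{s!\,p_0^s}\sum_{i=2}^s\frac{i(i-1)\,s_{(i)}}{s^i}.
\]

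Since $i(i-1)\leq i^2$, Lemma~\ref{lem:laplace-app} with $a=2$ applies provided $s\geq (16\cdot 2)^2=1024$---exactly the condition appearing in the statement---and bounds the sum by $5\cdot(2\cdot 2)\,s^{3/2}=20\,s^{3/2}$. Putting everything together,
\[
u_s \;\leq\; (c_0+1)\,c_w n^{k-j}p_0\cdot\frac{s^{s-1}}{s!\,p_0^s}\cdot 20\,s^{3/2} \;\leq\; 122\,c_0^2\,c_w n^{k-j}\,p_0^{1-s}\,\frac{s^{s+1/2}}{s!},
\]
where the final inequality uses $20(c_0+1)\leq 40\,c_0\leq 122\,c_0^2$ for $c_0\geq 1$.

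The main obstacle will be the sign bookkeeping in the second paragraph: the Lagrange formula \eqref{eq:Lambert-power} produces terms $(-j)^{j-r-1}$, whose interplay with the $(-p_0^{-1})^j$ from the substitution $W_0=W(-p_0^{-1}z)$ and with the $(-1)^i$ in the expansion of $x^2/(1+x)^2$ must be unwound so that the final coefficient of $z^s$ takes the explicit, manifestly positive form displayed above. Once this verification is in place, the application of Lemma~\ref{lem:laplace-app} and the Stirling-type simplification to reach the stated bound are routine.
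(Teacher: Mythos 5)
Your proposal follows essentially the same route as the paper's own proof: dominate the generating function of marked unicycles by $(c_0+1)\frac{zd}{dz}\,w\bigl(z(1+T_J(c_0^{-1}p_0^{-1}z))^{c_0}\bigr)$, rewrite via the Lambert function and \eqref{eq:boundwbullet} as $c_w n^{k-j}p_0\,W_0^2/(1+W_0)^2$, extract $[z^s]$ with \eqref{eq:Lambert-power} (your explicit positive coefficient formula $\frac{s^{s-1}}{s!\,p_0^s}\sum_{i=2}^s i(i-1)s_{(i)}/s^i$ is correct, so the sign bookkeeping you flag does go through), and finish with Lemma~\ref{lem:laplace-app} for $a=2$, which is exactly where $s\ge 1024$ comes from. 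Your bookkeeping in fact yields the constant $20(c_0+1)\le 40c_0$, comfortably within the stated $122\,c_0^2$ (the paper's own computation carries an extra harmless factor and ends at $244\,c_0^2$), so the proposal is complete and correct.
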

\begin{proof}
  Let $U(z)$\label{genfun:U} be the generating function of two-type unicycles with a marked vertex of type $j$, with $z$ indicating their sizes. Recall that $w(z)$ is the generating function of wheels with $z$ indicating the number of hyperedges, defined in \eqref{eq:w}. We recall that $W_0=W_0(z)=W(-p_0^{-1}z)$. Using arguments from the proof of Lemma~\ref{lem:bound-wheels} in Section~\ref{sec:wheels:bound-wheels}, from the correspondence above, we have
  \begin{align*}
    U(z) &\preceq (c_0+1) \frac{zd}{dz} \left[w(z(1+T_J(c_0^{-1}p_0^{-1}z))^{c_0}) \right] \stackrel{\eqref{eq:misc-1}}{\preceq} 2c_0 \frac{zd}{dz} \left[w(z \exp(-W_0)) \right] \\
    &\stackrel{\eqref{eq:misc-3}}{=} 2c_0 \frac{zd}{dz} \left[w(- p_0 W_0  ) \right] \stackrel{\eqref{eq:misc-2}, \eqref{eq:boundwbullet}}{\preceq} \frac{2c_0^2 z c_w n^{k-j} p_0 W_0^2}{1+W_0} \cdot \frac{1}{z (1+ W_0)} \\
    &= \frac{2c_0^2 c_w n^{k-j} p_0 W_0^2}{(1+W_0)^2} = 2c_0^2 c_w n^{k-j} p_0 \sum_{r \geq 2} (-1)^r (r-1) W_0^r.
  \end{align*}
  The initial domination comes from the definition of two-type unicycle and the fact that there are at most $(c_0+1)s$ vertices of type $j$ in a two-type unicycle of size $s$. The last equality follows from the substitution of the Taylor expansion of $x^2 (1+x)^{-2}$ with $x=W_0$.

  We can now use (\ref{eq:Lambert-power}) to estimate $u_s$.
  \begin{align*}
    u_s = [z^s]U(z) &\leq 2c_0^2 c_w n^{k-j} p_0 \sum_{r \geq 2} \frac{r(r-1) s^{s-r-1}}{(s-r)!} p_0^{-s} \\
                    &= 2c_0^2 c_w n^{k-j} p_0^{1-s} \frac{s^{s-3}}{(s-3)!} \left(2 + \sum_{i = 1}^{s-2} \frac{(i+1)(i+2) (s-3)_{(i)}}{s^i} \right) \\
                    &\leq 2c_0^2 c_w n^{k-j} p_0^{1-s} \frac{s^{s-1}}{s!} \left( 2 + \sum_{i=1}^{s} \frac{6i^2 s_{(i)}}{s^i} \right) \\
                    &\leq 2c_0^2 c_w n^{k-j} p_0^{1-s} \frac{s^{s-1}}{s!} \left( 2 + 6 \cdot 20 s^{3/2} \right) \leq 244 c_0^2 c_w n^{k-j} p_0^{1-s} \frac{s^{s+1/2}}{s!}.
  \end{align*}
  The penultimate inequality comes from the application of Lemma~\ref{lem:laplace-app} with $a=2$, which holds for $s \geq 1024$.
\end{proof}

We now estimate $\EX(N'_{\geq s})$. Given a two-type unicycle of size $s$ with a wheel of length $\ell$, the probability for it to appear in the corresponding wheel-based branching process is at most $p^s (1-p)^{sc_0\binom{n-j}{k-j}-sc_0}$, with $p^s$ accounting for the existences of hyperedges, and $(1-p)^{sc_0\binom{n-j}{k-j}-sc_0}$ for the absences of hyperedges. Indeed, since every labelled two-type tree that corresponds to an hypertree with $s'$ vertices of type $k$ has $s'c_0+1$ vertices of type $j$, in all $\ell c_0$ branching processes, $s-\ell$ hyperedges are discovered, meaning that there are in total $(s-\ell)c_0+\ell c_0 = s c_0$ many $j$-sets in the branching process. Then, each $j$-set makes at least $\left(\binom{n-j}{k-j}-1\right)$ queries to $k$-sets, receiving $s-\ell$ affirmatives in total, which makes the number of absences at least
\[
  sc_0\left(\binom{n-j}{k-j}-1\right) - s + \ell \geq sc_0\binom{n-j}{k-j} - s(c_0+1).
\]

Let $P_s$\label{var:Ps} be the number of $j$-sets in wheel-based branching processes of size exactly $s$. We recall that $p = p_0(1-\eps)$. With Lemma~\ref{lem:non-hypertree-config}, we have
\begin{align*}
  \EX(P_s) &\leq u_s p^s (1-p)^{sc_0\binom{n-j}{k-j}-s(c_0+1)} \\
            &\leq \Theta(1) n^{k-j} p_0 (1-\eps)^s \frac{s^{s+1/2}}{s!} (1-p)^{sc_0\binom{n-j}{k-j}-s(c_0+1)} \\
            &\leq \Theta(1) n^{k-j} p_0 (1-\eps)^s e^s \exp \left( -p \left( sc_0\binom{n-j}{k-j}-s(c_0+1)\right) \right).
\end{align*}
We recall that $p_0 = c_0^{-1}\binom{n-j}{k-j}^{-1}$ and $\delta = -\eps -\log(1-\eps)$. Thus, we have
\begin{align*}
  \EX(P_s) &\leq \Theta(1) \exp\left( s\log(1-\eps) + s - (1-\eps)s \right) \exp\left((1-\eps) s (c_0^{-1} + 1)\binom{n-j}{k-j}^{-1}\right) \\
            &\leq \Theta(1) \exp(-s\delta) \exp\left(2 s \binom{n-j}{k-j}^{-1}\right).
\end{align*}

We are now interested in $N'_{\geq s^\circ}$. As we only need $s^\circ \delta \to \infty$, without loss of generality, we may take $s^\circ \delta = o(\log n)$. We can also assume that $s^\circ \leq \binom{n}{k}$, \textit{i.e.}\ the total number of possible hyperedges. Let $\delta^- = \delta - 2\binom{n-j}{k-j}^{-1}$. Since $\delta = \eps^2/2 + O(\eps^3)$, with $\eps^2 n^{k-j}(\log n)^{-1} \to \infty$, we have $\delta^- = (1-o(1))\delta$, therefore $\delta^- > 0$. As each wheel-based branching process with $s$ hyperedges has at least $sc_0$ many $j$-sets, we have
\begin{align*}
  \EX(N'_{\geq s^\circ}) &\leq \sum_{s \geq s^\circ} c_0^{-1} s^{-1} \EX(P_s) \leq \sum_{s \geq s^{\circ}}\Theta(1) s^{-1} \exp(-s\delta^-) \\
                         &\leq \Theta(1) (s^\circ)^{-1} \sum_{s \geq s^{\circ}} \exp(-s\delta^-) \leq \Theta(1) \frac{\exp(-s^\circ \delta^-)}{s^\circ \delta^-}.
\end{align*}

Since $s^\circ (\delta - \delta^-) = s^\circ \binom{n-j}{k-j}^{-1} = o(\delta^{-1} n^{j-k} \log n) = o(1)$, and $s^\circ \delta \to \infty$, we have
\[
  \EX(N_{\geq s^\circ}) \leq \EX(N'_{\geq s^\circ}) \leq \Theta(1) \frac{\exp(-s^\circ \delta) \exp(o(1))}{s^\circ \delta + o(1)} = \Theta(1) \frac{\exp(-s^\circ \delta)}{s^\circ \delta}.
\]
We conclude that $\EX(N_{\geq s^\circ}) \to 0$, using that $s^\circ \delta \to \infty$. By Markov's inequality, whp we have $N_{\geq s^\circ} = 0$, meaning that there is no non-hypertree component in $\cH$ of size at least $s^\circ = \omega(\delta^{-1})$.\qed 
\medskip

\section{Concluding remarks} \label{sec:concrem}

\subsection{The critical window}
We note that our proofs impose two restrictions on $\eps$,
neither of which match the critical window given by $\eps^3 n^j\rightarrow \infty$, conjectured in \cite{CKKgiant}.

The first restriction is that $\eps^4 n^j \to \infty$. We note that the full strength of
this condition is not
needed for the proof of the upper bound (Lemma~\ref{lem:upper-bound}), in which
$\eps^3 n^j \to \infty$ would have been enough.
Rather, the condition is needed for the lower bound (Lemma~\ref{lem:lower-bound-second-moment}), and in particular Lemma~\ref{lem:bound-wheels}.
Heuristically, the discrepancy seems to arise from the fact that $\cB^\circ$
gives a rather bad approximation of a search process which encounters a wheel.
More precisely, in a breadth-first search process, one would explore around a wheel in both
directions and would encounter the same $j$-set or $k$-set again at the same level of the search tree.
By contrast, in $\cB^\circ$ one could encounter a previous $j$-set of $k$-set from many
generations ago, making $\cB^\circ$ a much larger set. Ideally, one would alter the
definition of the two-type branching process to forbid this from happening, but this
modification would make enumeration (particularly the lower bound) significantly more difficult.

The second restriction is that $\eps^2 n^{k-j} (\log n)^{-1} \to \infty$.
The full strength of this condition is used in the upper bound, while in the lower bound
the slightly weaker condition of $\eps^2 n^{k-j} \to \infty$ is needed (\textit{i.e.}\ without
the logarithmic factor). It is not clear whether these conditions are simply an artefact
of our proof methods or, particularly since the polynomial part appears in both upper
and lower bounds, a necessary restriction.
Further study is needed to clarify the situation. 

\subsection{Global structure}
In this paper we investigated the structure of the largest components,
but it would be interesting to characterise the structure of the \emph{whole} subcritical hypergraph.
To this end, one would need to investigate wheels more carefully.

Wheels can be classified by the size
of their \emph{centre}, \textit{i.e.}\ the set of vertices shared by every hyperedge in the same wheel.
If the centre of the wheel contains $i$ vertices, we call it an $i$-wheel.

Heuristically, the behaviour of the hypergraph with respect to $i$-wheels is fundamentally
dependent on $i$. In particular, at what probability we can expect $i$-wheels to appear
and how frequently changes as $i$ ranges from $0$ to $j-1$ -- those with a larger
centre appear first. Furthermore,
the length of a typical $i$-wheel seems to be constant if $i\ge 1$, but
logarithmic in $n$ for $i=0$, which makes the analysis of this case rather different.

It would be interesting to study these results further and rigorously prove
these heuristics, as well as tackling further questions, such as
\begin{itemize}
\item How many $i$-wheels are there in total, and of which length?
\item How many $j$-sets are in components containing $i$-wheels?
\item What is the size of the largest component that contains an $i$-wheel?
\item Do any components contain more than one wheel?
\end{itemize}

\subsection{Symmetry phenomenon}

As mentioned in the introduction, random graphs display a symmetry phenomenon around the phase transition:
the subcritical random graph with $p=(1-\eps)p_0$ has approximately the same distribution as the
supercritical random graph with $p=(1+\eps)p_0$ with the giant component removed, in particular
regarding the orders of components and their structure (they are all trees or unicyclic).
More generally, the same is true in $k$-uniform hypergraphs for the case $j=1$
(see for example~\cite{BCOK14}, Lemma~15).

However, the analogous result for any $j\ge 2$ has not yet been proved. The order of the giant component
was asymptotically determined in~\cite{CKKgiant}, but in contrast to the case $j=1$,
the distribution of the remaining hypergraph fundamentally depends not only on the number of $j$-sets
in the giant component, but also on how they are distributed across the vertices, making this
case rather more challenging.

\bibliographystyle{plain}
\bibliography{References}

\newpage

\appendix

\section{Glossary}\label{ap:glossary}

For the reader's convenience, we include a glossary of some of the most important terminology and notation defined in the paper.

\vspace{0.3cm}

\subsection{Terminology}

\vspace{0.3cm}

\begin{tabular}{|l|m{6cm}|l|}
\hline
\textbf{Term}${\phantom{\bigg(\bigg)}}$
 & \textbf{Informal description} & \textbf{First defined}\\
\hline
Hypertree & Hypergraph analogue of a tree & p.~\pageref{term:hypertree}${\phantom{\bigg(\bigg)}}$ \\
\hline
Wheel & Hypergraph analogue of cycle & p.~\pageref{term:wheel}${\phantom{\bigg(\bigg)}}$ \\
\hline
Two-type graph & Bipartite graph, vertices type $j$ and $k$ with additional structure & p.~\pageref{term:twotypegr}${\phantom{\bigg(\bigg)}}$ \\
\hline
Labelled two-type graph & Vertices labelled by $j$- and $k$-sets & p.~\pageref{term:labtwotypgr}${\phantom{\bigg(\bigg)}}$ \\
\hline
Two-type tree & Acyclic two-type graph & p.~\pageref{term:twotyptree}${\phantom{\bigg(\bigg)}}$\\
\hline
Two-type unicycle & Unicyclic two-type graph & p.~\pageref{term:twotypunic}${\phantom{\bigg(\bigg)}}$\\
\hline
Component search process & Breadth-first search process on $j$-connected components &p.~\pageref{term:compsearchproc}${\phantom{\bigg(\bigg)}}$\\
\hline
Two-type branching process & Upper coupling for the component search process &p.~\pageref{term:branchproc}${\phantom{\bigg(\bigg)}}$\\
\hline
Wheel-based branching process & Modified branching process starting from (potential) wheels &p.~\pageref{term:wbbranchproc}${\phantom{\bigg(\bigg)}}$\\
\hline
\end{tabular}

\vspace{1cm}
\subsection{Classes}

\vspace{0.3cm}

\begin{tabular}{|l|m{9cm}|l|}
\hline
\textbf{Class}${\phantom{\bigg(\bigg)}}$ & \textbf{Informal description} & \textbf{First defined}\\
\hline
$\cT$ & Two-type branching process (with labels) & p.~\pageref{class:branchproc}${\phantom{\bigg(\bigg)}}$\\
\hline
$\cB$ & Set of possible instances of $\cT$ & p.~\pageref{class:B}${\phantom{\bigg(\bigg)}}$\\
\hline
$\cB^-$ & Subset corresponding to hypertrees & p.~\pageref{class:Bmin}${\phantom{\bigg(\bigg)}}$ \\
\hline
$\cB^\circ$ & $\cB\setminus \cB^-$, \textit{i.e.}\ instances containing a wheel & p.~\pageref{class:Bcirc}${\phantom{\bigg(\bigg)}}$\\
\hline
\end{tabular}

\vspace{1cm}

\subsection{Random variables}

\vspace{0.3cm}

\begin{tabular}{|l|m{9cm}|l|}
\hline
\textbf{Variable}${\phantom{\bigg(\bigg)}}$ & \textbf{Informal description} & \textbf{First defined}\\
\hline
$L_i$ & Size of $i$-th largest component & p.~\pageref{var:Li}${\phantom{\bigg(\bigg)}}$\\
\hline
$D_s$ & \# components in $\cH$ of size $s$ & p.~\pageref{var:Ds}${\phantom{\bigg(\bigg)}}$\\
\hline
$S_+$ & \# $j$-sets in components of size between $s_*$ and $s^*$ & p.~\pageref{var:S+}${\phantom{\bigg(\bigg)}}$\\
\hline
$C_s$ & \# $j$-sets in hypertree components in $\cH$ of size $s$ & p.~\pageref{var:Cs}${\phantom{\bigg(\bigg)}}$\\
\hline
$N_s$ & \# non-hypertree components of size $s$ & p.~\pageref{var:Ns}${\phantom{\bigg(\bigg)}}$\\
\hline
$N_s'$ & \# wheel-based branching processes of size $s$, when starting a branching process from each possible wheel & p.~\pageref{var:N's}${\phantom{\bigg(\bigg)}}$\\
\hline
$P_s$ & Total \# $j$-sets in wheel-based branching processes of size $s$, when starting a branching process from each possible wheel & p.~\pageref{var:Ps}${\phantom{\bigg(\bigg)}}$\\
\hline
$R_s$ & Total \# type $j$ vertices in instances of $\cT$ of size $s$ among $\binom{n}{j}$ independent instances & p.~\pageref{var:Rs}${\phantom{\bigg(\bigg)}}$\\
\hline
\end{tabular}

\vspace{1cm}

\subsection{Parameters}

\vspace{0.3cm}

\begin{tabular}{|l|m{8cm}|l|}
\hline
\textbf{Parameter}${\phantom{\bigg(\bigg)}}$ & \textbf{Informal description} & \textbf{First defined}\\
\hline
$c_0$ & $\binom{k}{j}-1$ & p.~\pageref{par:c0andp0}${\phantom{\bigg(\bigg)}}$\\
\hline
$p_0$ & $c_0^{-1}\binom{n-j}{k-j}^{-1}$ & p.~\pageref{par:c0andp0}${\phantom{\bigg(\bigg)}}$\\
\hline
$\delta$ & $-\eps - \log(1-\eps)= \frac{\eps^2}{2} + O(\varepsilon^2)$ & p.~\pageref{par:delta}${\phantom{\bigg(\bigg)}}$\\
\hline
$\lambda$ & $\eps^3 \binom{n}{j}$ & p.~\pageref{par:lambda}${\phantom{\bigg(\bigg)}}$\\
\hline
$u_s$ & \# possible two-type unicycles, $s$ vertices of type $k$, one marked vertex of type $j$ & p.~\pageref{par:Us}${\phantom{\bigg(\bigg)}}$ \\
\hline
$w_\ell$ & \# possible wheels of length $\ell$ & p.~\pageref{par:wl}${\phantom{\bigg(\bigg)}}$ \\
\hline
$w^*_\ell(a_1,\ldots,a_\ell)$ & \# possible wheels of length $\ell$ where $J_i$ freezes $a_i$ vertices (with fixed starting $j$-set and orientation) & p.~\pageref{par:wlstar}${\phantom{\bigg(\bigg)}}$ \\
\hline
\end{tabular}

\vspace{1cm}

\subsection{Generating functions}

\vspace{0.3cm}

\begin{tabular}{|l|m{8.5cm}|l|}
\hline
\textbf{Function}${\phantom{\bigg(\bigg)}}$  & \textbf{Associated class} & \textbf{First defined}\\
\hline
$T_J(z)$ & Unlabelled two-type trees rooted at a vertex $j$ of type $k$ & p.~\pageref{genfun:TJ}${\phantom{\bigg(\bigg)}}$  \\
\hline
$w(z)$ & Wheels & p.~\pageref{eq:w}${\phantom{\bigg(\bigg)}}$  \\
\hline
$w^\bullet(z)$ & Wheels with a marked vertex of type $k$ & p.~\pageref{genfun:wbullet}${\phantom{\bigg(\bigg)}}$ \\
\hline
$U(z)$ & Two-type unicycles with a marked vertex of type $j$ & p.~\pageref{genfun:U}${\phantom{\bigg(\bigg)}}$ \\
\hline
\end{tabular}

\end{document}